\newtheorem{thm}{Theorem}[section]
\newtheorem{lem}[thm]{Lemma}
\newtheorem{cor}[thm]{Corollary}
\newtheorem{prop}[thm]{Proposition}
\newtheorem{remark}[thm]{Remark}
\newtheorem{defn}[thm]{Definition}
\newcommand{\R}{\mathbb{R}}
\newcommand{\ve}{\varepsilon}
\begin{document}
\title[Mean Curvature Flow in a Cone]{Propagation of a Mean Curvature Flow in a Cone$^\S$}
 \thanks{$\S$ This research was partly supported by NSFC (No.11671262).} % and Shanghai NSF in China (No.17ZR1420900).}
\author[B. Lou] %, L. Yuan and M. Zhang]
{Bendong Lou$^\dag$} %, Lixia Yuan$^{\dag,*}$ and Mengmeng Zhang$^{\dag}$
\thanks{$\dag$ Mathematics and Science College, Shanghai Normal University, Shanghai 200234, China.
{\bf Email:} {\sf lou@shnu.edu.cn (B. Lou)}} %, {\sf yuanlixia@shnu.edu.cn (L. Yuan)}, {\sf 553069447@qq.com (M. Zhang)}}
%\thanks{$^*$ Corresponding author.}
\date{\today}

\begin{abstract}
We consider a mean curvature flow in a cone, that is, a hypersurface in a cone which moves
toward the opening with normal velocity equaling to the mean curvature, and the contact angle between the
hypersurface and the cone boundary being $\varepsilon$-periodic in its position.
First, by constructing a family of self-similar solutions, we give a priori estimates for the radially
symmetric solutions and prove the global existence. Then we consider the homogenization limit
as $\ve\to 0$, and use {\it the slowest self-similar solution} to characterize the solution, 
with error $O(1)\ve^{1/6}$, in some finite time interval.
\end{abstract}

\subjclass[2010 Mathematics Subject Classification]{35K93, 35R35, 35C06, 35B27}
\keywords{Mean curvature flow, free boundary problems, self-similar solution, homogenization, asymptotic behavior.}
\maketitle

%\maketitle
%%%%%%%%%%%%%%%%%%%%%%
%Section 1
%%%%%%%%%%%%%%%%%%%%%%
\baselineskip 18pt
\section{Introduction}
We consider the propagation of a hypersurface in a cone. The law of the motion of the hypersurface is
the following mean curvature flow
\begin{equation}\label{V}
V =  H \quad \mbox{ on } \Gamma_t \subset \Omega,
\end{equation}
where, $\Gamma_t$ denotes a time-dependent hypersurface in a cone $\Omega$, $\Gamma_t$ contacts the
boundary of $\Omega$ with prescribed angles, $V$ and $H$
denote the normal velocity and the mean curvature of $\Gamma_t$, respectively,
and the cone $\Omega$ is defined as
$$
\Omega :=  \left\{ (x,y)\in \mathbb{R}^{N+1}\ \big|\ y\in \R \mbox{ satisfies } y >|x|, \  x=(x_1,...,x_N)\in \R^N \right\}.
$$

Mean curvature flow \eqref{V} as well as its generalized versions have been extensively studied in the last decades.
To name only a few, Gage and Hamilton \cite{GageH}, Grayson \cite{Gr1, Gr2}, Angenent \cite{Ang1, Ang2},
Chou and Zhu \cite{CZ} and references therein considered shrinking closed plane curves driven by \eqref{V}.
Huisken \cite{Hui1, Hui2} etc. considered closed surfaces in higher dimension spaces.
On the other hand, the mean curvature flow (with or without a driving force) in domains with boundaries
were considered by some authors. For example, in case $\Omega$ is a cylinder, it was studied by Altschuler
and Wu \cite{AW1, AW2}, Matano, Lou, et al. \cite{CaiLou, LMN, MNL, YuanLou} etc. Under certain conditions,
it was shown that the flow will converge to a traveling wave (or, translating solution);
In case $\Omega$ is the half space or a sector on the plane, the existence and asymptotic behavior
of the flow were studied in \cite{CGK, ChenGuo, GMSW, GH, Koh, KL, Nao} etc. under the boundary condition: $\Gamma_t$
contacts the boundary of $\Omega$ with constant angles.
%In particular, when $\Omega$ is a sector,
%it was proved that the flow can be characterized by a self-similar solution (see details below).

In this paper we consider graphic surfaces in the cone $\Omega$, that is, for some function $y= u(x,t)$,
\begin{equation}\label{aaa}
\Gamma_t = \{ (x, u(x,t)) \mid x\in \omega(t)\subset \R^N \} \subset \Omega,
\end{equation}
where $\omega(t)$ is the definition domain of $u$ containing the origin. To avoid sign confusion, the unit normal vector ${\bf n}$ to $\Gamma_t $ will always be
chosen upward, and so
$$
{\bf n} \ = \frac{(-Du,1)}{\sqrt{1+|Du|^2}} \quad \mbox{and} \quad V= \frac{\partial}{\partial t} \Gamma_t \cdot {\bf n}
= \frac{u_t}{\sqrt{1+|Du|^2}}.
$$
The sign of $H $ will be understood in accordance with this choice of the direction of the normal, which means that $H$ is positive at those points where the hypersurface is convex. So,
$$
H= - \mbox{div}_{(x,y)} {\bf n} = \mbox{div}_x  \left[ \frac{Du}{\sqrt{1+|Du|^2}} \right] =\Big( \delta_{ij} - \frac{D_i u D_j u}{1+|Du|^2} \Big) \frac{D_{ij} u}{\sqrt{1+|Du|^2}}.
$$
Thus, the mean curvature flow \eqref{V} is expressed as
\begin{equation}\label{p01}
u_t =  \Big( \delta_{ij} - \frac{D_i u D_j u}{1+|Du|^2} \Big) D_{ij} u , \qquad  x\in \omega(t)\subset \R^N,\ t>0.
\end{equation}
In addition, we require that $\Gamma_t$ contacts $\partial \Omega$, the boundary of $\Omega$, on
the closed curve $\{(x,|x|) \mid x\in \partial \omega(t)\}$ with prescribed angle $\phi\in (0,\frac{\pi}{4})$.
Denoting by $\nu := \frac{1}{\sqrt{2}|x|} (- x, |x|)$ the inner unit normal to
$\partial \Omega$ at point $(x,|x|)$, we obtain the following boundary condition to our problem:
\begin{equation}\label{p02}
{\bf n} \cdot \nu = \frac{ x\cdot Du +|x|}{|x|\sqrt{2(1+|Du|^2)}} = \cos\phi .
\end{equation}
Therefore, our problem can be expressed by the quasilinear parabolic equation \eqref{p01}
with oblique boundary condition \eqref{p02}.

In the special case where $\Gamma_t$ is a radially symmetric surface, we have $u(x,t)=u(r,t)$ with $r=|x|$, and so
$\omega(t) = B_{\xi(t)}(0) := \{x\in \R^N \mid |x|<\xi(t)\}$  for some $\xi(t)$ satisfying
$\xi (t) =  u(\xi (t), t)$. In this case the equation \eqref{p01} is reduced to
\begin{equation}\label{p}
u_t  = \frac{u_{rr} }{1 +u_r^2 } + \frac{(N-1) u_r}{r}, \qquad  0<r<\xi (t),\ t>0,
\end{equation}
and the boundary condition \eqref{p02} becomes
\begin{equation}\label{cond-homo}
u_r (0,t)=0,\quad u_r( \xi (t), t ) = \tan\Big( \frac{\pi}{4} - \phi\Big), \qquad t>0.
\end{equation}
In the rest of the paper we will focus on this symmetric problem. The general
un-symmetric case will be studied later since the gradient
estimate for the solution is far from well understood, as other quasilinear parabolic equations with oblique
boundary conditions.

When the problem is considered in a homogeneous media, the contact angle $\phi$
should be chosen as a constant. Such cases with $N=1$ (that is, $\Omega$ is a two dimensional sector) have
been studied in \cite{CGK,GH,Koh, KL} etc.  However, when the media or the environment
is a heterogeneous one, the contact angle should be non-constant, as considered in \cite{CaiLou, YuanLou} etc.
The boundary condition we will consider in this paper is such one:
\begin{equation}\label{boundary1}
u_r(0,t)=0,\qquad u_r (\xi (t),t ) = k (u(\xi (t),t)) = k (\xi (t)) ,\quad  t>0,
\end{equation}
where, $k$ is a smooth function satisfying
\begin{equation}\label{g-value}
k \mbox{ is } \varepsilon\mbox{-periodic}\quad \mbox{and} \quad  0< k_0 := \min\limits_{u\in [0,\varepsilon]} k(u) \leq  k^0 := \max\limits_{u\in [0,\varepsilon]} k(u) <1.
\end{equation}

Finally, we will impose
\begin{equation}\label{initial}
u(r,0)= u_0 (r) \qquad {\rm for}\ \ r\in [0, \xi(0)]
\end{equation}
as the initial condition to the problem \eqref{p}-\eqref{boundary1}. Here,  $u_0$ is an {\it admissible function},
which means that $u_0 \in C^1$,
$$
u_0(r)>0 \mbox{ in } [0,\xi(0)],\quad u_0(\xi(0))=\xi(0),\quad u'_0(0) =0,\quad u'_0(\xi(0))= k(\xi(0)) \mbox{ and } |u'_0(r)|<1.
$$

\begin{defn}
A function $u(r,t)$ defined for $0\leq r\leq \xi(t),\ 0\leq t <T$ is called a classical solution of \eqref{p}-\eqref{boundary1}-\eqref{initial}
in the time interval $[0,T)$ if
\begin{itemize}
\item[(a)] $u,\ u_r$ are continuous for $0\leq r \leq \xi(t),\ 0\leq t <T$, and $u_{rr},\ u_t$ are continuous for
$0 < r < \xi(t),\ 0 < t <T$;

\item[(b)] $u$ satisfies \eqref{p}-\eqref{boundary1} for $0<r<\xi(t),\ 0<t<T$ and $u(r,0)$ satisfies \eqref{initial}.
\end{itemize}
It is called a time-global classical solution if $T=+\infty$.
\end{defn}

On the well-posedness we have the following result.

\begin{thm}\label{thm:exist}
Assume \eqref{g-value} and that $u_0$ is an admissible function with $u'_0(r)\geq 0$.
Then the problem \eqref{p}-\eqref{boundary1}-\eqref{initial} has a unique time-global classical solution $u(r,t)$.
%If we assume further that $u''_0 (r) >0$, then $u_t (r,t;u_0)>0$.
\end{thm}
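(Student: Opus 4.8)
The plan is to prove local existence and uniqueness by a fixed‑point argument after straightening the free boundary, to derive the a priori estimates needed for continuation by comparison with the self‑similar solutions constructed earlier in the paper, and then to continue the solution up to any finite time.

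\emph{Local existence.} First I would straighten the free boundary by setting $\rho=r/\xi(t)$, $v(\rho,t)=u(\rho\,\xi(t),t)$, which turns \eqref{p} into a uniformly parabolic quasilinear equation for $v$ on the fixed domain $\rho\in[0,1]$ with an extra drift of order $\dot\xi/\xi$, turns $u_r(0,t)=0$ into $v_\rho(0,t)=0$, turns the slope condition in \eqref{boundary1} into $v_\rho(1,t)=\xi(t)\,k(\xi(t))$, and turns the geometric constraint $u(\xi(t),t)=\xi(t)$ into $v(1,t)=\xi(t)$; differentiating the last identity and using the equation expresses $\dot\xi$ through $v$ and its derivatives at $\rho=1$. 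Treating $(v,\xi)$ as the unknown, a contraction mapping argument in parabolic Hölder spaces on a short time interval gives a unique classical solution, exactly as for one‑phase free boundary problems of this type (cf. \cite{CaiLou,YuanLou}); admissibility of $u_0$ supplies the compatibility conditions at the corners needed for $C^{2+\alpha,\,1+\alpha/2}$ regularity. This produces a unique classical solution on a maximal interval $[0,T_{\max})$.

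\emph{Comparison and a priori estimates.} Next I would establish a comparison principle for \eqref{p}--\eqref{boundary1}: if $(\bar u,\bar\xi)$ is a supersolution and $(\underline u,\underline\xi)$ a subsolution with ordered data, then $\underline u\le u\le\bar u$ and $\underline\xi\le\xi\le\bar\xi$ while all three exist; the only delicate point is a first contact at the free boundary, which is excluded by combining the constraint $u(\xi,t)=\xi$, the slope condition, and the Hopf boundary‑point lemma, as in the classical Stefan problem. In particular this gives uniqueness, so it remains to prove $T_{\max}=+\infty$. Using \eqref{g-value}, the self‑similar solution with constant contact slope $k^0$, time‑shifted so as to lie above $u_0$, is a supersolution (its boundary slope $k^0\ge k(\bar\xi)$), and the one with slope $k_0$, shrunk so as to lie below $u_0$, is a subsolution; comparison then sandwiches $u$ between them. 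This yields, on each $[0,T]$, a bound $0<c_1\sqrt{t+t_1}\le\xi(t)\le c_2\sqrt{t+t_2}$ (the free boundary neither reaches the apex nor escapes in finite time) together with a uniform $C^0$ bound for $u$. For the gradient, put $w=u_r$; differentiating \eqref{p},
\[
w_t=\frac{w_{rr}}{1+w^2}-\frac{2w\,w_r^2}{(1+w^2)^2}+(N-1)\frac{w_r}{r}-(N-1)\frac{w}{r^2},
\]
whose zeroth‑order coefficient $-(N-1)/r^2$ is nonpositive (the apex singularity being harmless since $u(\cdot,t)$ extends to a smooth even function with $u_r(0,t)=0$). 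On the parabolic boundary $w$ equals $0$ at $r=0$, $k(\xi(t))\in[k_0,k^0]$ at $r=\xi(t)$, and $u_0'\ge0$ at $t=0$; the maximum principle then gives
\[
0\le u_r(r,t)\le\theta:=\max\{k^0,\ \textstyle\sup_{[0,\xi(0)]}u_0'\}<1 ,
\]
so $u$ is increasing (hence $\xi(t)$ is well defined) and \eqref{p} is uniformly parabolic.

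\emph{Global existence.} Finally, on $[0,T]$ the uniform parabolicity together with the $C^0$ and $C^1$ bounds for $u$ and the bound $c_1\sqrt{t+t_1}\le\xi(t)\le c_2\sqrt{t+t_2}$ allow interior parabolic Schauder estimates on $\{0<r<\xi(t)\}$, estimates up to the apex by symmetry, and Schauder estimates for the oblique‑derivative problem on the straightened domain up to the free boundary; together these give $C^{2+\alpha,\,1+\alpha/2}$ bounds for $u$ and a $C^{1+\alpha/2}$ bound for $\xi$ (in particular a bound for $\dot\xi$) depending only on $T$ and the data, not on $t<T$. Hence $(u,\xi)$ stays in the class to which the local theorem applies, with estimates uniform up to $T_{\max}$, so the solution can be continued past any finite time; therefore $T_{\max}=+\infty$. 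I expect the main obstacles to be the regularity up to the free boundary required for the continuation step and a clean treatment of a possible first contact at the free boundary in the comparison principle; the problem‑specific input that makes everything run — the self‑similar barriers trapping $\xi(t)$ between two multiples of $\sqrt{t+\mathrm{const}}$ and the sign structure forcing $0\le u_r<1$ — is available from the earlier part of the paper.
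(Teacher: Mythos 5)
Your overall architecture matches the paper's: self-similar barriers for the $L^\infty$ and free-boundary bounds, a maximum-principle argument on $w=u_r$ for the gradient bound, a comparison principle, a domain-straightening transformation, and then standard parabolic theory plus a priori estimates to run continuation. The genuine divergence is in the straightening step, and it is worth flagging because that is where most of the paper's technical work lives. You use the classical free-boundary rescaling $\rho=r/\xi(t)$, which fixes the domain $[0,1]$ but (i) introduces a nonlocal drift $\rho\,\dot\xi/\xi$ with $\dot\xi$ expressed through $v$ and its derivatives at $\rho=1$, and (ii) keeps the $(N-1)v_\rho/(\rho\xi^2)$ apex singularity, which you dispatch with a one-line remark about smooth even extension. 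The paper instead sets $\zeta=x/u(x,t)$, transplanting the cone exactly onto the fixed ball $B_1$ without any separate $\xi$-evolution and — crucially — working with the full $N$-dimensional form \eqref{p01} rather than the radial form precisely so that the apex singularity never appears. That choice is not cosmetic: the author lists the apex singularity as one of the two main difficulties to be overcome (see the opening of subsection 2.3). Your approach likely also works, but the apex regularity and the nonlocal $\dot\xi$-coupling would need a more careful fixed-point setup than the one sentence you devote to them.

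One concrete inaccuracy in your account of where the hypothesis $u_0'\ge 0$ enters. You use it to assert $u_r\ge 0$ and then claim this is what makes $\xi(t)$ well defined and \eqref{p} uniformly parabolic. Neither of those actually needs $u_r\ge 0$: uniqueness of the contact point follows from $u(0,t)>0$ and $|u_r|<1$, and uniform parabolicity of \eqref{p} needs only $|u_r|$ bounded. In the paper the hypothesis $u_0'\ge 0$ is used in a quite specific place — it forces $q=\zeta Dv\ge 0$, which is the sign condition that makes the quantities $Q_\pm$ positive and hence makes the transformed operator $a_{ij}$ in \eqref{v-eq} uniformly elliptic (this is the whole content of Remark \ref{rem:u''>0}). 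Under your $\rho=r/\xi(t)$ straightening, the principal coefficient is simply $1/(\xi^2+v_\rho^2)$, whose ellipticity requires no sign on $u_r$ at all; so if your route were carried out carefully, the hypothesis $u_0'\ge0$ might in fact be removable — a possible small improvement, but one your write-up does not notice because the justification you give for needing $u_0'\ge 0$ is not the operative one.
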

%%%%%%%%%%%%%%%%%%%%%%%%%%%%%%%%%%%%%%%%
%%%%%%%%%%%%%%%%%%%%%%%%%%%%%%%%%%%%%%%%%

The additional condition $u'_0(r) \geq 0$ in this theorem is used only to guarantee the uniform parabolicity of the converted 
equation in a fixed domain (see details in Remark \ref{rem:u''>0}). With this existence result in hand, we next consider the asymptotic behavior of $u$.
The so-called self-similar solutions will play a key role in this field.
When $k\in (0,1)$ is a constant, a self-similar solution of \eqref{p}-\eqref{boundary1} is a solution of the form
$$
u =\sqrt{2pt} \cdot \varphi \Big( \frac{r}{\sqrt{2pt}};  \ k \Big),
$$
where, $p$ is a positive constant and $\varphi = \varphi(z; k)$ solves
\begin{equation}\label{self-eq}
\left\{
 \begin{array}{ll}
\displaystyle  \frac{\varphi'' (z)}{1+ [\varphi'(z)]^2} =  p[\varphi(z) - z \varphi'(z) ]  - \frac{N-1}{z} \varphi'(z),& 0<z< 1,\\
 \varphi'(0)=0,\quad \varphi'(1)= k. &
\end{array}
\right.
\end{equation}
In case $N=1$, such solutions has been constructed in \cite{CGK, GH, Koh} etc., and
they were used to estimate and to characterize the solution of \eqref{p}-\eqref{boundary1} with constant $k$.
In case $N>1$, however, the construction of such solutions turns out to be much more complicated
due to the presence of the term $(N-1)\varphi'(z) /z$ (see details in the next section).
Moreover, in our current problem, it is easily seen from the nonlinear boundary condition \eqref{boundary1}
that $u$ moves with violently changing instantaneous speeds near $r=\xi(t)$. Hence it is very hard
to give a precise estimate for $u(r,t)$ by using a single self-similar solution. However,
we can show that, in the homogenization case (i.e., as $\ve\to 0$), a finer estimate
is possible by using a special self-similar solution. More precisely, denote by
$\sqrt{2Pt}\; \Phi \big( \frac{r}{\sqrt{2Pt}}\big)$ the self-similar solution of \eqref{self-eq} with
$k= k_0 := \min k(u)$, then we have the following estimate.

\begin{thm}\label{thm:est}
Assume, for some $s_0 > t_0 >0$,
\begin{equation}\label{initial-est}
\sqrt{2P t_0}\; \Phi \Big( \frac{r}{\sqrt{2P t_0}} \Big)\leq u_0 (r) \leq
\sqrt{2P s_0}\; \Phi \Big( \frac{r}{\sqrt{2P s_0}}\Big),
\end{equation}
in their common domains, and $u(r,t)$ is the time-global solution of the problem
\eqref{p}-\eqref{boundary1}-\eqref{initial} obtained in the previous theorem.  If $\ve\ll 1$, 
then for $t\in [0, O(1) \ve^{-4/3}]$ there holds
$$
   \sqrt{2P(t+t_0)}\; \Phi \Big( \frac{r}{\sqrt{2P(t+t_0)}} \Big)\leq u(r,t)
   \leq \sqrt{2P(t+s_0)}\; \Phi \Big( \frac{r}{\sqrt{2P(t+s_0)}}\Big) +
   O(1)\ve^{1/6}
$$
in their common domains.
\end{thm}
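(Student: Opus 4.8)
\emph{Lower bound.} The plan is a two–sided comparison with the slow self-similar solution; the lower inequality is elementary, the upper one carries all the analytic content. I use the properties of $\Phi$ obtained together with its construction — $\Phi$ increasing, $\Phi'(0)=0$, $\Phi'(1)=k_0$, $\Phi(1)=1$, and $\Phi(z)>z$ on $(0,1)$ — and the fact that an admissible $u_0$ with $|u_0'|<1$ and $u_0(\xi(0))=\xi(0)$ satisfies $u_0(r)\ge r$ on $[0,\xi(0)]$, with equality only at $\xi(0)$. Set $w^-(r,t):=\sqrt{2P(t+t_0)}\,\Phi\bigl(r/\sqrt{2P(t+t_0)}\bigr)$ on $0\le r\le\xi^-(t):=\sqrt{2P(t+t_0)}$. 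By the definition of $\Phi$ through \eqref{self-eq} with $k=k_0$, $w^-$ solves \eqref{p}, has $w^-_r(0,t)=0$, and at its free boundary has contact slope $w^-_r=k_0\le k(\xi^-(t))$, so $w^-$ is a lower barrier for \eqref{p}--\eqref{boundary1} (its contact slope never exceeds the prescribed one). The left half of \eqref{initial-est} gives $w^-(\cdot,0)\le u_0$ on the common domain; evaluating at $r=\min\{\xi^-(0),\xi(0)\}$ and using $\Phi(z)>z$ together with $u_0(\xi(0))=\xi(0)$ forces $\xi^-(0)=\sqrt{2Pt_0}\le\xi(0)$. Hence the comparison principle for \eqref{p}--\eqref{boundary1} (the one behind the uniqueness part of Theorem~\ref{thm:exist}) gives $u\ge w^-$ on the common domain for all $t\ge0$: an interior contact is excluded by the strong maximum principle; a contact at $r=0$ by the Hopf lemma, since $w^-_r(0)=u_r(0)=0$; and a contact of the two free boundaries would force $k<k_0$ there, impossible by \eqref{g-value}. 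The borderline configuration — a free-boundary contact at a height where $k=k_0$, where the two contact slopes coincide — is removed in the usual way, by first comparing $u$ with $\sqrt{2P(t+t_0-\delta)}\,\Phi(\cdot)$ and letting $\delta\downarrow0$. This is the left inequality of the conclusion, with no restriction on $t$ or $\ve$.

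\emph{Upper bound: the super-solution.} Let $w^+(r,t):=\sqrt{2P(t+s_0)}\,\Phi\bigl(r/\sqrt{2P(t+s_0)}\bigr)$, with free boundary $\xi^+(t):=\sqrt{2P(t+s_0)}$. This also solves \eqref{p}, but its contact slope is only $k_0$, whereas $k(\xi)$ oscillates up to $k^0>k_0$; so $w^+$ is \emph{not} an upper barrier and $u$ will generally rise above it, which is why an error term is unavoidable. The plan is to construct a genuine super-solution $\overline u$ that stays $O(\ve^{1/6})$-close to $w^+$ on $[0,T]$, $T=O(\ve^{-4/3})$, of the form
\[
   \overline u(r,t)=\sqrt{2P(t+s_0)}\,\Phi\Bigl(\tfrac{r}{\sqrt{2P(t+s_0)}}\Bigr)+\beta(r,t),
\]
where $\Phi$ is continued a little past $z=1$ by prolonging \eqref{self-eq}, and $\beta\ge0$ is a corrector supported in a layer $\{\xi^+(t)-\ell\le r\le\xi_{\overline u}(t)\}$ of width $\ell$ behind the free boundary, vanishing to second order at the inner edge and chosen so that $\overline u_r\ge k^0$ at $\xi_{\overline u}(t)=\xi^+(t)+O(\|\beta(\cdot,t)\|_\infty)$. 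Then $\overline u_r\ge k^0\ge k(\xi_{\overline u})$ at every instant, so the oblique condition \eqref{boundary1} holds with the super-solution inequality uniformly over each period of $k$. Since $w^+$ solves \eqref{p} and the right side of \eqref{p} depends only on $Du,D^2u$, $\overline u$ is a super-solution of \eqref{p} exactly when $\beta_t\ge\mathcal L_{w^+}\beta+R[\beta]$, where $\mathcal L_{w^+}$ is the linearization of the right side of \eqref{p} at $w^+$ and $R[\beta]$ the quadratic remainder; this is arranged by taking $\beta$ to be, inside the moving layer, a slowly varying steady super-solution of $\mathcal L_{w^+}$ with sufficiently negative second derivative near the free boundary. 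Finally $\overline u(\cdot,0)=w^+(\cdot,0)+\beta(\cdot,0)\ge w^+(\cdot,0)\ge u_0$ by the right half of \eqref{initial-est}, and $\xi_{\overline u}(0)\ge\xi(0)$ just as in the lower-bound step. The comparison principle then gives $u\le\overline u=w^++\beta$ on the common domain for as long as $\overline u$ is a super-solution, i.e.\ for $t\le T$, whence $u\le w^++O(\ve^{1/6})$.

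\emph{Scales, and the main obstacle.} The width $\ell$ and shape of $\beta$ are fixed by three competing demands. First, the layer must contain many periods of $k$, i.e.\ $\ell\gg\ve$, so that the $\ve$-fast contact-angle oscillation is homogenised across it and the $O(1)$ slope deficit $k^0-k_0$ can be produced by a \emph{small} corrector. Second, the super-solution inequality for $\overline u$ ties $\|\beta\|_\infty$, $\ell$ and the speed $V(t)=P/\sqrt{2P(t+s_0)}$ at which the layer is swept forward together. Third, the quadratic remainder $R[\beta]$ must stay negligible throughout $[0,T]$, which caps $T$. Tracking these constraints leads to the choice $\ell\sim\ve^{1/3}$ — so the layer holds $\sim\ve^{-2/3}\to\infty$ periods — to a corrector of size $\|\beta\|_\infty\sim\ve^{1/6}$, and to validity up to $T\sim\ve^{-4/3}$, which are exactly the figures in the statement. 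The crux, and essentially the entire content of the theorem beyond the lower bound, is this corrector construction: one must exhibit $\beta$ that at once (i) has contact slope exceeding every value of $k(\xi)$ the free boundary will meet — an $O(1)$ demand — (ii) is a super-solution correction for the quasilinear equation \eqref{p}, and (iii) has sup-norm only $O(\ve^{1/6})$. Reconciling (i) with (ii)--(iii) is possible only because the oscillation is $\ve$-periodic and is averaged inside the intermediate layer of width $\ve^{1/3}$; it is this homogenisation estimate — not the comparison principle, which is routine — that both produces the $\ve^{1/6}$ error and limits the time horizon to $O(\ve^{-4/3})$.
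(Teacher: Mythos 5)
Your lower-bound argument is essentially the paper's: $\underline u(r,t+t_0)=\sqrt{2P(t+t_0)}\,\Phi\bigl(r/\sqrt{2P(t+t_0)}\bigr)$ has contact slope exactly $k_0\le k(\cdot)$, so it is a lower solution in the sense of Definition~2.6, and the comparison lemma (Lemma~2.7) gives the left inequality for all $t\ge0$. This is fine, and it is what the paper does (see \eqref{underline-u<u}).

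\textbf{Upper bound.} Here your plan diverges from the paper in a way that matters, and the divergence is where the gap lies. You propose a \emph{single} super-solution $\overline u = w^+ + \beta$ whose contact slope is $\ge k^0$ uniformly, with $\beta$ a boundary layer of width $\ell\sim\ve^{1/3}$ and height $\sim\ve^{1/6}$. The paper does something quite different: its corrector is $\psi_1=L_1\ve^{1/2}(Nt+r^2/2)$, globally supported, weakly convex, and \emph{it does not attempt to make the contact slope reach $k^0$}. Indeed at $t=\tau_1\sim\ve^{-1/3}$ the slope contribution $\psi_{1r}=L_1\ve^{1/2}r$ at $r\sim\sqrt{\tau_1}$ is only $O(\ve^{1/3})$. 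The oblique boundary inequality is \emph{not} verified for $u^+_1$; the paper says explicitly that it is too hard to arrange. Instead, the free-boundary ordering $\xi(t)\le\eta_1(t)$ is proved by a separate argument: at a first putative crossing time $s$, one finds a nearby $r^\star$ with $k(r^\star)=k_0$ (possible by $\ve$-periodicity), solves the stationary problem \eqref{barrier-v} through $(r^\star,r^\star)$ with slope $k_0$, and shows this stationary barrier lies above $u^+_1(\cdot,s)$ at $r_\star=r^\star-\ve^{1/2}$, blocking $u$ from reaching $\eta_1(s)$. This barrier argument, and then an iteration over geometrically growing time intervals with rescaling (\S3.2--3.3), is the actual mechanism; the sequences $a_n,b_n,T_n,S_n$ in \eqref{ab} are what make $\ve^{-4/3}$ and $\ve^{1/6}$ emerge.

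\textbf{Why your plan, as written, does not close.} First, the requirement $\overline u_r\ge k^0$ at the free boundary is pointwise, not averaged: at every moment when $k(\xi_2(t))=k^0$, the slope deficit $k^0-k_0=O(1)$ must be supplied by $\beta_r$ in full. ``Homogenisation across the layer'' cannot reduce it. Second, a corrector $\beta\ge0$ vanishing to second order at the inner edge of the layer is necessarily convex there ($\beta_{rr}\ge0$); but the super-solution inequality
\[
\beta_t\ \ge\ \frac{w^+_{rr}+\beta_{rr}}{1+(w^+_r+\beta_r)^2}-\frac{w^+_{rr}}{1+(w^+_r)^2}+\frac{N-1}{r}\beta_r
\]
has, near the inner edge, a negative left side ($\beta_t\approx-\dot\xi^+\beta_r\le0$ for a layer riding the front) against a positive right side ($\beta_{rr}>0$, $\beta_r\ge0$), and the favourable cross term $(A-B)w^+_{rr}$ is of size $O(\beta_r/R(t))$, which cannot dominate $\beta_{rr}/\ell$ unless $\ell\gtrsim R(t)\sim\sqrt t$. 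So the layer ansatz fails precisely where $\beta$ must be glued to zero. Third, with $\beta_r\sim O(1)$ at the outer edge, the sup-norm $\|\beta\|_\infty$ scales like the layer width $\ell$, not like $\ell^{1/2}$, so the pair $(\ell,\|\beta\|_\infty)=(\ve^{1/3},\ve^{1/6})$ is internally inconsistent. In short: the lower bound is right, but the upper bound proposes a genuinely different route whose central construction is not carried out and, as specified, is obstructed at the inner edge of the layer. The paper's device — a weak global corrector plus a stationary barrier at each potential free-boundary crossing, iterated over rescaled time windows — is what is actually needed.
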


The important feature in this result is that $u$ is estimated by {\it the slowest self-similar solution} $\sqrt{2Pt} \Phi(\cdot; k_0)$ 
(it is the slowest one since it moves slower than all the other self-similar solutions $\sqrt{2pt} \varphi(\cdot; k)$ with $k\in (k_0, k^0]$), 
rather than other self-similar solutions or some kinds of average of them. The reason for this self-similar solution being selected is, roughly speaking,
the nonlinear effect in the problem is taken only on the boundary. For a solution starting from the slowest self-similar solution,
the boundary condition \eqref{boundary1} accelerates it a little bit near the boundary but can not speed up the
whole solution (especially, the middle part of the solution) essentially. This kind of result is quite
different from the common homogenization problems where the homogenization limits generally depend on the harmonic or arithmetic averages
of the spatial heterogeneity.
Note that the estimate given in this theorem holds only in some finite time interval (it is wide when $\ve\ll 1$). 
Due to the violent oscillation for the derivative of the solution on the boundary, it is still difficult 
 to give a uniform estimate in the whole time interval $[0,\infty)$ (see Remark \ref{rem:best-est}
for details).

This paper is arranged as the following. In subsection 2.1 we construct self-similar solutions with
prescribed constant angles on the boundaries. In subsection 2.2 we use the slowest/fastest self-similar
solutions as lower/upper solutions to give the $L^\infty$ estimate for the solution,  and use the maximum principle to give the gradient estimate.
In subsection 2.3 we convert our problem into a complicated quasilinear one in a fixed domain
and give its global existence result. Based on this result we prove Theorem 1.1 in subsection 2.4.
In section 3 we consider the homogenization limit as $\ve\to 0$, and prove Theorem 1.2
by constructing a series of delicate upper solutions.

%%%%%%%%%%%%%%%%%%%%%%%%%%%%%%%%%%%%%%%%%%%%%%%%%%%
%%%%%%%%%%%%%%%%%%%%%%%%%%%%%%%%%%%%%%
%%%%%%%%%%%%%%%%%%%%%%%%%%%%%%%%%%%%%%
%%%%%%%%%%%%%%%%%%%%%%%%%%%%%%%%%%%%%%
%%%%%%%%%%%%%%%%%%%%%%%%%%%Section 2
%%%%%%%%%%%%%%%%%%%%%%%%%%%%%%%%%%%%%
%%%%%%%%%%%%%%%%%%%%%%%%%%%%%%%%%
\section{Well-posedness}

%%%%%%%%%%%%%%%%
%%%%%%%%%%%%%%%%
\subsection{Self-similar  solutions for the problem with prescribed constant angles}
A self-similar solution of \eqref{p01} is a solution of the form $u(x,t) = \sqrt{2pt} \cdot w(\frac{x}{\sqrt{2pt}})$
for some $p>0$, where $w= w(\tilde{x})$ satisfies
\begin{equation}\label{self-w-eq}
p[w -\tilde{x} \cdot Dw ]= \Big( \delta_{ij} - \frac{D_i w D_j w}{1+|Dw|^2} \Big) D_{ij} w.
\end{equation}
In particular, when $u$ is a radially symmetric function (so is $w$), the self-similar solution is
$$
u = \sqrt{2pt} \cdot w\Big(\frac{x}{\sqrt{2pt}} \Big) =  \sqrt{2pt }\cdot \varphi \left( {\frac {r}{\sqrt{2 pt }}}\right)
$$
with $r= |x|$. Such a function is a solution of \eqref{p}-\eqref{boundary1} with $k=const.$ if $\varphi = \varphi (z; k)$ solves
\begin{equation}\label{ODE}
  \left\{
  \begin{array}{ll}
  \displaystyle  {\frac {\varphi''(z) } {1+ [\varphi'(z)]^2 }} = p[\varphi(z) -z \varphi'(z)] - \frac{N-1}{z} \varphi'(z),  & 0 <z <1,\\
   \varphi'(0) = 0,\quad \varphi (1) =1,\quad \varphi'(1) = k . &
  \end{array}
   \right.
\end{equation}
Clearly, with the additional condition $\varphi(1)=1$, the graph of $y= \sqrt{2 pt}\; \varphi \left( {\frac {r}
{\sqrt{2 pt}}}; k\right) $ contacts the line $y=r$
at $r= R(t):= \sqrt{2pt}$.

To avoid the singularity at $z=0$ in the equation, we replace $z$ by $z+\epsilon$ for any given $\epsilon \in (0,1)$
in the last term of the equation, and first consider the following initial value problem:
\begin{equation}\label{IVP}
  \left\{
  \begin{array}{ll}
  \displaystyle  {\frac {\varphi''(z) } {1+ [\varphi'(z)]^2 }} = p[\varphi(z) -z \varphi'(z)] - \frac{N-1}{z+\epsilon} \varphi'(z),  & z<1,\\
   \varphi (1) = 1,\quad \varphi' (1) = k \in (0,1). &
  \end{array}
   \right.
\end{equation}

First, we fix $k\in (0,1)$ and $\epsilon$ and consider the influence of $p$ on the solutions. For each $p\geq 0$,
by the standard theory of ordinary differential equations, this problem has a
unique solution $\varphi (z)$ in a maximal existence interval $(z_\infty , 1]$ with $z_\infty \geq -\epsilon$.
(In order to emphasize the dependence of $\varphi(z)$ on the parameter $p$, sometimes we also write
$\varphi(z)$ as $\varphi(z;p)$).
Moreover, the initial value condition $\varphi'(1)=k>0$ implies that the graph $\Gamma$ of $\varphi(z)$ goes downward as $z$ decreasing from $1$
and enters the region $D:= (0,1)\times (0,1)$. It will remain in this open domain until one of the following cases happens:
\begin{description}
\item[Case A] $\Gamma$ touches the above boundary $[0,1]\times \{1\}$ of $D$ at $(\bar{z}, \varphi(\bar{z}))= (\bar{z},1)$;
\item[Case B] $\Gamma$ touches the left boundary $\{0\}\times (0,1)$  of $D$ at $(0,\varphi(0))$;
\item[Case C] $\Gamma$ touches the bottom boundary $[0,1]\times \{0\}$  of $D$ at $(\bar{z}, \varphi(\bar{z}))= (\bar{z},0)$.
\end{description}
Another way to classify $\varphi(z)$ is to see whether $\varphi(z)$ has some critical points in $[0,1)$:
\begin{description}
\item[Case 1] there exists $z_* \in (0,1)$ such that $\varphi(z_* )\in (0,1)$, $\varphi'(z_* )=0$ and $\varphi'(z)>0$ for $z\in (z_* , 1]$;
\item[Case 2]  Case B happens, $\varphi(0)\in (0,1)$, $\varphi'(0)=0$ and $\varphi'(z)>0$ for $z\in (0, 1]$;
\item[Case 3] Case B happens, $\varphi'(z)>0$ for $z\in [0, 1]$;
\item[Case 4] Case C happens, $\varphi'(z)>0$ for $z\in [\bar{z}, 1]$;
\item[Case 5] Case C happens, there exists $z^* \in [0,1)$ such that $\varphi(z^* ) = \varphi'(z^* )=0$ and $\varphi'(z )>0$ for $z\in (z^*, 1]$.
\end{description}
When $\epsilon =0$,  Case 2  is what we desired.

\begin{lem}\label{lem:small and large p}
\begin{enumerate}
\item[\rm (i).] Case 5 is impossible.
\item[\rm (ii).] If  Case 1 happens, then $\varphi'(z )<0$ for $z\in (z_\infty , z_* )$.
\end{enumerate}
\end{lem}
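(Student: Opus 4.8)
The plan is to read off both assertions from the structure of the ODE in \eqref{IVP}, using a phase-plane / sign-of-derivative analysis together with the uniqueness of solutions to the initial value problem. Throughout, write $F(z) := \varphi(z) - z\varphi'(z)$, so that the equation reads $\varphi''/(1+(\varphi')^2) = pF(z) - (N-1)\varphi'(z)/(z+\epsilon)$, and note that at any interior critical point $z_c \in (0,1)$ with $\varphi'(z_c)=0$ we have $\varphi''(z_c) = p\,\varphi(z_c)$; thus the sign of $\varphi''$ at a critical point is governed entirely by the sign of $\varphi$ there.

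For part (i), suppose Case 5 occurs, i.e.\ there is $z^\ast \in [0,1)$ with $\varphi(z^\ast) = \varphi'(z^\ast) = 0$ and $\varphi'(z) > 0$ on $(z^\ast,1]$. If $z^\ast > 0$, then at $z^\ast$ the right-hand side of the equation is $pF(z^\ast) - (N-1)\varphi'(z^\ast)/(z^\ast+\epsilon) = p(\varphi(z^\ast) - z^\ast\varphi'(z^\ast)) - 0 = 0$, so $\varphi''(z^\ast) = 0$ as well; hence $\varphi \equiv 0$ is the solution through $(z^\ast,0)$ with zero slope, contradicting $\varphi(1)=1$ by uniqueness. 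If $z^\ast = 0$, one has $\varphi(0)=\varphi'(0)=0$; here I would argue that the desingularized term $(N-1)\varphi'(z)/(z+\epsilon)$ is smooth near $z=0$ (since $\epsilon>0$), so the same uniqueness argument applies: the only solution with $\varphi(0)=\varphi'(0)=0$ is $\varphi\equiv 0$, again contradicting $\varphi(1)=1$. So Case 5 cannot happen.

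For part (ii), assume Case 1: there is $z_\ast \in (0,1)$ with $\varphi(z_\ast) \in (0,1)$, $\varphi'(z_\ast)=0$, and $\varphi'>0$ on $(z_\ast,1]$. Since $\varphi(z_\ast)>0$, the formula above gives $\varphi''(z_\ast) = p\,\varphi(z_\ast) > 0$, so $z_\ast$ is a strict local minimum and $\varphi' < 0$ for $z$ just below $z_\ast$. I claim $\varphi'$ stays negative on all of $(z_\infty, z_\ast)$. Suppose not; let $z_1 \in (z_\infty, z_\ast)$ be the largest point where $\varphi'(z_1) = 0$, so $\varphi' < 0$ on $(z_1, z_\ast)$ and $\varphi$ is strictly decreasing there, whence $\varphi(z_1) > \varphi(z_\ast) > 0$. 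Then again $\varphi''(z_1) = p\,\varphi(z_1) > 0$, which forces $\varphi' > 0$ immediately to the right of $z_1$ — contradicting $\varphi'<0$ on $(z_1,z_\ast)$. Hence no such $z_1$ exists and $\varphi' < 0$ throughout $(z_\infty, z_\ast)$.

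The main obstacle I anticipate is the boundary case $z^\ast = 0$ in part (i): one must make sure that replacing $z$ by $z+\epsilon$ with $\epsilon>0$ genuinely removes the singularity so that the standard uniqueness theorem applies at the left endpoint, and that Case 5 as stated really does require $\varphi\equiv 0$ rather than being ruled out by some subtler monotonicity obstruction; once the desingularization is invoked correctly, everything else is a short sign-chasing argument resting on the identity $\varphi''(z_c) = p\varphi(z_c)$ at critical points and on uniqueness for the ODE.
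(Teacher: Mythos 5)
Your proof is correct and follows essentially the same route as the paper: part (i) rests on uniqueness of solutions for the desingularized ODE with initial data $\varphi(z^*)=\varphi'(z^*)=0$, and part (ii) uses the identity $\varphi''(z_c)=p\varphi(z_c)>0$ at critical points to rule out a second critical point. Your explicit treatment of the $z^*=0$ subcase in part (i) and the slightly different packaging of the contradiction in part (ii) (arguing $\varphi'>0$ to the right of $z_1$ rather than $\varphi''(z_1)\le 0$) are cosmetic differences; the paper handles both the same way, just more tersely.
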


\begin{proof}
(i). When Case 5 happens, the equation in \eqref{IVP} with initial data $\varphi(z^* ) = \varphi'(z^* )=0$ has a
unique solution $\varphi(z)\equiv 0$. This contradicts the initial conditions in \eqref{IVP}.

(ii). When Case 1 happens, by the equation of $\varphi$ we have $\varphi'' (z_* ) = p \varphi(z_* ) > 0$.
Thus $\varphi'(z)<0$ for $z$ satisfying $0<z_* -z \ll 1$. We prove the conclusion by contradiction. Assume $z^* <z_* $
is another critical point of $\varphi$, and assume it is the largest one of such points in $(z_\infty , z_* )$. Then
$$
0\geq \varphi''(z^*) = p \varphi(z^*) > p\varphi(z_* )>0,
$$
a contradiction. This proves the lemma.
\end{proof}

From this lemma we see that in Case 1, both Case A and Case B are possible, but the critical point of $\varphi$ is
unique in $(z_\infty , 1)$. Denote
\begin{equation}\label{def:Sigma}
 \Sigma_i  := \{p\geq 0\mid \mbox{Case } i \mbox{ happens}\},\quad i = \mbox{1,\ 2,\ 3 or 4}.
\end{equation}
Clearly, these sets are disjoint each other, and their union is $[0,\infty)$. Set
$$
P_1 := \frac{2\arctan k + (k+4)(N-1)}{1-k},\quad P_2 := \frac{(N-1)k}{2}.
$$

\begin{lem}\label{lem:small and large p}
\begin{itemize}
\item[(i)] $\Sigma_1$ is an open set containing $[P_1, \infty)$;
\item[(ii)] $\Sigma_3 \cup \Sigma_4$ is a bounded open set containing $[0,P_2]$.
\end{itemize}
\end{lem}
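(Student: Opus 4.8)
The plan is to read off both inclusions from the behaviour of the trajectory of $\varphi(\cdot;p)$ as $z$ decreases from $1$, using two elementary facts. First, integrating the equation in \eqref{IVP} and writing $g(z):=\varphi(z)-z\varphi'(z)$, one has $\frac{d}{dz}\arctan\varphi'(z)=pg(z)-\frac{N-1}{z+\epsilon}\varphi'(z)$, hence for $z<1$
\[
\arctan\varphi'(z)=\arctan k-\int_z^1\Big[pg(s)-\tfrac{N-1}{s+\epsilon}\varphi'(s)\Big]\,ds.
\]
Second, $\varphi''(1)=(1+k^2)\big[p(1-k)-\tfrac{(N-1)k}{1+\epsilon}\big]$, and since $0<k<1$ and $0<\epsilon<1$ its sign changes as $p$ crosses $\tfrac{(N-1)k}{1-k}$, a number lying strictly above $P_2$ and strictly below $P_1$; this switch separates the two regimes. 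Once $[P_1,\infty)\subseteq\Sigma_1$ is established, boundedness of $\Sigma_3\cup\Sigma_4$ is free, since the $\Sigma_i$ are pairwise disjoint and hence $\Sigma_3\cup\Sigma_4\subseteq[0,P_1)$.

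To prove $[P_1,\infty)\subseteq\Sigma_1$, fix $p\ge P_1$ and suppose, for contradiction, that $\varphi'>0$ on $[1/2,1]$. I would first show $\varphi''>0$ on $[1/2,1]$ by a first-exit argument: $\varphi''(1)>0$ because $P_1(1-k)>(N-1)k>\tfrac{(N-1)k}{1+\epsilon}$, and at a hypothetical first point $z_2\ge 1/2$ with $\varphi''(z_2)=0$ the bounds $\varphi'\le k$ (valid where $\varphi''>0$, since there $\varphi'$ decreases in $z$) and $g\ge 1-k$ (since $g'=-z\varphi''<0$ on $(z_2,1]$, so $g$ rises as $z$ falls, from $g(1)=1-k$) force $0=\varphi''(z_2)\ge p(1-k)-\tfrac{(N-1)k}{z_2}$, i.e. $z_2\le\tfrac{(N-1)k}{p(1-k)}<\tfrac12$ for $p\ge P_1$, a contradiction. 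With $\varphi'\le k$ and $g\ge 1-k$ on $[1/2,1]$ secured, and using $\tfrac1{s+\epsilon}\le\tfrac1s$ together with $\int_{1/2}^1\tfrac{ds}{s}=\ln 2$, the identity at $z=1/2$ gives
\[
\arctan\varphi'(\tfrac12)\le\arctan k-\tfrac{p(1-k)}{2}+(N-1)k\ln 2\le(N-1)\Big(k\ln 2-\tfrac{k+4}{2}\Big)<0,
\]
since $\tfrac{P_1(1-k)}{2}=\arctan k+\tfrac{(k+4)(N-1)}{2}$ and $k\ln 2-\tfrac{k+4}{2}<\ln 2-\tfrac52<0$ (the case $N=1$, where $P_1=\tfrac{2\arctan k}{1-k}$ and the $(N-1)$ term drops, goes through the same way). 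This contradicts $\varphi'(1/2)>0$, so $\varphi'$ has a largest zero $z_*\in[1/2,1)$; rerunning the bootstrap on $(z_*,1]$ gives $\varphi''>0$ there, hence $\varphi'\le k$ and $1-k<\varphi(z_*)<1$, i.e. Case~1 holds and $p\in\Sigma_1$.

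To prove $[0,P_2]\subseteq\Sigma_3\cup\Sigma_4$, fix $0\le p\le P_2=\tfrac{(N-1)k}{2}$; for $N=1$ this is just $p=0$, when $\varphi=kz+1-k$ and $0\in\Sigma_3$ directly, so assume $N\ge2$. Then $\varphi''(1)<0$, because $p(1-k)<\tfrac{(N-1)k}{2}<\tfrac{(N-1)k}{1+\epsilon}$, so $\varphi'>k$ just to the left of $z=1$; and wherever $\varphi'>k$ one has $\varphi<1$, hence $g=\varphi-z\varphi'<1$ and $z+\epsilon<2$, so
\[
\varphi''=(1+\varphi'^2)\Big[pg-\tfrac{N-1}{z+\epsilon}\varphi'\Big]<(1+\varphi'^2)\Big(p-\tfrac{(N-1)k}{2}\Big)\le 0.
\]
A continuity/first-exit argument then shows $\varphi'>k>0$ on the whole $z$-interval of existence: a first return to $\varphi'=k$ would force $\varphi'$ to be strictly decreasing in $z$ on the interval immediately to its right, hence $>k$ at the return point, which is absurd. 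Consequently $\varphi$ is strictly increasing in $z$, so $\varphi<1$ throughout and the trajectory cannot touch the top side of $D$; it therefore leaves $D$ through $\{0\}\times(0,1)$ or through $[0,1]\times\{0\}$, with $\varphi'>0$ all the way, i.e. Case~3 or Case~4 holds and $p\in\Sigma_3\cup\Sigma_4$.

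Openness of $\Sigma_1$ and of $\Sigma_3\cup\Sigma_4$ I would get from continuous dependence of $(\varphi,\varphi')(\cdot;p)$ on $p$, uniform on compact subintervals of the existence interval. If $p_0\in\Sigma_1$ with critical point $z_*$, then $\varphi''(z_*;p_0)=p_0\varphi(z_*;p_0)>0$, so $z_*$ is a transversal (upward) zero of $\varphi'(\cdot;p_0)$ and $\varphi'(\cdot;p_0)>0$ on $(z_*,1]$; for $p$ close to $p_0$ the solution persists past $z_*-\delta$, remains positive on a fixed interval to the right of $z_*$, and acquires exactly one critical point nearby---again transversal, since $\varphi(\cdot;p)>0$ there---with $\varphi$-value in $(0,1)$, so $p\in\Sigma_1$. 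Similarly, if $p_0\in\Sigma_3\cup\Sigma_4$ the exit through $\{0\}\times(0,1)$ (extending slightly past $z=0$, possible since $z_\infty\ge-\epsilon$) or the transversal exit through $[0,1]\times\{0\}$, together with $\varphi'>0$ on the relevant compact interval, is preserved under small perturbations of $p$, so $\Sigma_3\cup\Sigma_4$ is open. The main obstacle is the bookkeeping in the second paragraph: the a priori bounds $\varphi'\le k$ and $g\ge 1-k$ live only on the region where $\varphi''>0$ has already been secured, so the "$\varphi''>0$" claim and those bounds have to be extracted in the right order via a first-exit time, and the numerical margin built into $P_1$ (the appearance of $\arctan k$ rather than $k$, and of $\ln 2$ at $z=1/2$) is precisely what makes the closing inequality work; one should also record, as the A--B--C classification tacitly assumes, that the trajectory actually reaches $\partial D$ rather than developing an interior vertical tangent.
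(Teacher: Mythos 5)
Your proposal is correct and reaches both inclusions, but the route through part (i) is genuinely different from the paper's, and in fact more elementary. The paper, after fixing $p\ge P_1$ and working on $[1/2,1]$, estimates the \emph{second} derivative of $\psi:=\arctan\varphi'$, namely $\psi''<4(N-1)\varphi'$, integrates once to get $\psi'>2\arctan k$, and integrates again to reach a contradiction. You instead bootstrap $\varphi''>0$ on $[1/2,1]$ by a first-exit argument (at a hypothetical zero $z_2$ of $\varphi''$ the bounds $\varphi'\le k$ and $g:=\varphi-z\varphi'\ge1-k$ force $z_2<\tfrac{(N-1)k}{p(1-k)}<\tfrac12$), and then, having those same monotone bounds on all of $[1/2,1]$, integrate the \emph{first}-order identity for $\psi$ once with the explicit $\ln 2$ margin. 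That avoids the $\psi''$ computation entirely. For (ii) the two arguments are closer: the paper does first-exit on $\varphi''=0$, you on $\varphi'=k$, and both land on the same sign comparison $p-\tfrac{(N-1)k}{2}\le0$; the bookkeeping is equivalent. You also supply the boundedness of $\Sigma_3\cup\Sigma_4$ from the disjointness of the $\Sigma_i$ and $[P_1,\infty)\subset\Sigma_1$, which the lemma asserts but the paper's proof does not actually argue --- a useful observation.

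Two small corrections. First, the parenthetical ``\,$\varphi'\le k$ (valid where $\varphi''>0$, since there $\varphi'$ decreases in $z$)\,'' has the monotonicity backwards: $\varphi''>0$ means $\varphi'$ \emph{increases} in $z$, and the bound $\varphi'\le k$ on $(z_2,1]$ follows because the maximum is attained at $z=1$ where $\varphi'(1)=k$. The conclusion is right, the stated reason is not. Second, for $N=1$ your closing chain collapses to $\arctan\varphi'(\tfrac12)\le 0$, not $<0$; to finish you need the strict inequality $g(s)>1-k$ for $s\in[\tfrac12,1)$, which does hold since $g'=-z\varphi''<0$ there, so the integral $\int_{1/2}^1 p\,g\,ds$ is strictly greater than $p(1-k)/2$. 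It is worth making this explicit rather than saying the $N=1$ case ``goes through the same way,'' since the $(N-1)$-margin you rely on for $N\ge2$ is exactly what vanishes.
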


\begin{proof}
(i) First we show that $[P_1, \infty) \subset \Sigma_1$. Fix a $p\geq P_1$. By continuity we see that the solution of
\eqref{IVP}  satisfies
$$
\varphi(z) >0,\quad \varphi'(z) >0, \quad \varphi'' (z) >0,
$$
for $z$ satisfying $0<1-z\ll 1$, since they are true at $z=1$.

(a). The following case is impossible: there exists $z_1 \in [\frac12,1)$ such that $\varphi(z_1) =0$
and $\varphi(z)>0,\ \varphi'(z)> 0,\ \varphi''(z)>0$ in $(z_1, 1]$. Otherwise, by continuity we have
$\varphi'(z_1) \geq 0,\ \varphi''(z_1)\geq 0$ and by the equation we have
$$
0\leq \frac{\varphi''(z_1)}{1+ [\varphi'(z_1)]^2 } = -pz_1 \varphi'(z_1) - \frac{N-1}{z_1 +\epsilon} \varphi'(z_1) \leq0 .
$$
Hence $\varphi'(z_1)=0$, and so $\varphi \equiv 0 $ is the unique solution, a contradiction.

(b). The following case is impossible: there exists $z_2\in [\frac12,1)$ such that $\varphi'' (z_2) =0$ and $\varphi (z)> 0,\ \varphi'(z)>0, \ \varphi''(z)>0$ in $(z_2, 1]$.  Otherwise,  with  $\psi(z) := \arctan \varphi'(z) $ we have
\begin{equation}\label{eq-psi}
\psi'' = -pz \varphi'' - \frac{N-1}{z+\epsilon} \varphi'' + \frac{N-1}{(z+\epsilon)^2} \varphi' < 4(N-1)  \varphi' \mbox{ in } (z_2, 1].
\end{equation}
For any $z\in [z_2, 1)$, integrating the above inequality over $[z,1]$ we have
\begin{equation}\label{psi'>}
\psi' (z) >  \psi'(1) - 4 (N-1) [\varphi(1)-\varphi(z)] \geq p(1-k) - \frac{N-1}{1+\epsilon} k  - 4(N-1)  \geq 2\arctan k,
\end{equation}
by $p\geq P_1$.  In particular, $\psi'(z_2) >0$ contradicts our assumption $\varphi''(z_2) =\psi'(z_2) = 0$.

(c). The following case is impossible: $\varphi (z)> 0,\ \varphi'(z)>0, \ \varphi''(z)>0$ in $[\frac12 , 1]$.  Otherwise,  by integrating \eqref{psi'>} over $[\frac12,1]$ we have
$$
\arctan k  < \psi(1)-\psi\Big( \frac12 \Big) < \psi(1) =  \arctan k,
$$
a contradiction.

From the above discussion we see that the only possible case is: there exists $z_3\in [\frac12,1)$ such that $\psi(z_3) = \varphi' (z_3) =0$ and $\varphi (z)> 0,\ \varphi'(z)>0, \ \varphi''(z)>0$ in $(z_3, 1]$.  Therefore, any  $p\geq P_1$ belongs to $\Sigma_1$.

For each $p_0\in \Sigma_1$, there exists $z_*(p_0) \in (0, 1)$ such that
$$
\varphi'(z_1; p_0) < 0 = \varphi' (z_* (p_0); p_0) < \varphi'(z_2; p_0) \quad \mbox{  for } z_\infty  < z_1 < z_*(p_0) <z_2 \leq 1.
$$
Fix such a pair $z_1$ and $z_2$, since $\varphi(z;p)$ as well as its derivatives depend on $p$ continuously, we see that $\varphi'(z_1; p)<0 < \varphi'(z_2; p)$ for $p$ satisfying $|p-p_0|\ll 1$. This implies that such $p$ also belongs to $\Sigma_1$, and so $\Sigma_1$ is an open set.

(ii) Taking  $p \in [0, P_2]$ and taking $z=1$ in the equation we have
$$
\frac{\varphi''(1)}{1+ k^2} = p(1-k) -\frac{N-1}{1+\epsilon} k <0,
$$
provided $\epsilon <1 $. By continuity, $\varphi''(z) <0$ for $z$ with $0<1-z\ll 1$.
We claim that $\varphi''(z)<0$ for all $z\in (z_\infty ,1]\cap [0,1]$, and so $p\in \Sigma_3\cup \Sigma_4$.
We prove the claim by contradiction. Assume that $z_4 $ is the rightmost point in
$(z_\infty ,1)\cap [0,1)$ such that $\varphi''(z)=0$. Then
$\varphi''(z)<0$ in $(z_4, 1)$, and so as $z$ decreasing from $1$ to $z_4$, $\varphi'(z)$ becomes larger and larger, while $\varphi(z)$ becomes smaller and smaller. In particular at $z=z_4$ we have, when $\epsilon \in (0,1)$,
$$
0=\frac{\varphi''(z_4)}{1+ [\varphi'(z_4)]^2} = p (\varphi(z_4)- z_4 \varphi'(z_4)) -\frac{N-1}{z_4+\epsilon} \varphi'(z_4) < p  -\frac{N-1}{2} k \leq 0,
$$
a contradiction. This implies that $\Sigma_3\cup \Sigma_4$ contains $[0, P_2]$.

For any $p^0\in \Sigma_3 \cup \Sigma_4$, we have $\min\limits_{\bar{z}\leq z\leq 1} \varphi'(z;p^0) >0$ for $\bar{z}$ in Case 4 or $\bar{z}=0$ in Case 3.
By the continuous dependence we see that the minimum of $\varphi'(z;p)$ is also positive when $p$ is near $p^0$ and when the graph of $\varphi(z;p)$ lies in $\overline{D}$.
This means that $\Sigma_3 \cup \Sigma_4$ is an open set. (Note that $\Sigma_4$ is not necessarily to be open, since it may contain such $\tilde{p}$ that $\varphi(0;\tilde{p})=0$).
\end{proof}

A consequence of the above lemma is the following result.

\begin{cor}\label{cor:Case-2}
$\Sigma_2 = [0,\infty) \backslash (\Sigma_1 \cup \Sigma_3 \cup \Sigma_4) \subset [P_2, P_1]$ is a nonempty and closed set.
\end{cor}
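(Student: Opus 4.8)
The plan is to read everything off the two lemmas just proved together with the elementary topology of the half-line; I do not expect any genuinely hard step. First I record the identity $\Sigma_2=[0,\infty)\setminus(\Sigma_1\cup\Sigma_3\cup\Sigma_4)$: since Case~5 cannot occur, every $p\ge 0$ falls into exactly one of Cases~1--4, so (as already observed right after \eqref{def:Sigma}) the sets $\Sigma_1,\Sigma_2,\Sigma_3,\Sigma_4$ are pairwise disjoint with union $[0,\infty)$, and the identity is immediate.

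Next, for the inclusion $\Sigma_2\subset[P_2,P_1]$ I invoke the previous lemma: $[P_1,\infty)\subset\Sigma_1$ and $[0,P_2]\subset\Sigma_3\cup\Sigma_4$. Since these are disjoint from $\Sigma_2$, no point of $[0,P_2]\cup[P_1,\infty)$ can lie in $\Sigma_2$, whence $\Sigma_2\subset(P_2,P_1)\subset[P_2,P_1]$. (One checks in passing that $P_2<P_1$ for all $k\in(0,1)$ and $N\ge 1$: since $0<1-k<1$ and $2\arctan k>0$, one has $P_1>2\arctan k+(k+4)(N-1)>(k+4)(N-1)\ge\tfrac{(N-1)k}{2}=P_2$.) For closedness I again use the previous lemma: $\Sigma_1$ and $\Sigma_3\cup\Sigma_4$ are open in $[0,\infty)$, hence so is their union, and therefore $\Sigma_2$, being the complement of that union in $[0,\infty)$, is closed in $[0,\infty)$; as $[0,\infty)$ is itself closed in $\mathbb{R}$, $\Sigma_2$ is closed in $\mathbb{R}$ (and, lying in the compact interval $[P_2,P_1]$, it is in fact compact).

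For nonemptiness I use connectedness: $\Sigma_1$ and $\Sigma_3\cup\Sigma_4$ are disjoint, nonempty (they contain $P_1$ and $0$ respectively) and open in $[0,\infty)$, so if $\Sigma_2$ were empty we would have $[0,\infty)=\Sigma_1\sqcup(\Sigma_3\cup\Sigma_4)$, disconnecting the connected set $[0,\infty)$ --- a contradiction; hence $\Sigma_2\neq\emptyset$. (Equivalently, $p^*:=\sup(\Sigma_3\cup\Sigma_4)$ is finite because $\Sigma_3\cup\Sigma_4$ is bounded, and by openness of both $\Sigma_1$ and $\Sigma_3\cup\Sigma_4$ the point $p^*$ lies in neither, so $p^*\in\Sigma_2$.) The only non-formal ingredient in all of this is the tacit claim that, once Case~5 is excluded, the trichotomy A/B/C forces every admissible graph of $\varphi$ on $(z_\infty,1]\cap[0,1]$ into one of Cases~1--4; but this is exactly the exhaustiveness statement recorded at \eqref{def:Sigma}, so nothing beyond the two preceding lemmas is needed, and the corollary is essentially a formal consequence of them.
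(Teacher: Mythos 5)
Your proof is correct and matches the (unwritten) argument the paper clearly intends: the corollary is stated as "a consequence of the above lemma" with no proof given, and your reasoning — disjointness and exhaustiveness of the four cases from Lemma~2.1(i), the containments $[P_1,\infty)\subset\Sigma_1$ and $[0,P_2]\subset\Sigma_3\cup\Sigma_4$ from Lemma~2.2, openness of $\Sigma_1$ and $\Sigma_3\cup\Sigma_4$, and a connectedness (or supremum) argument for nonemptiness — is exactly the standard filling-in of that gap. Your side remarks (the verification that $P_2<P_1$, the observation that $\Sigma_2$ is actually contained in the open interval $(P_2,P_1)$ and is compact, and the equivalent $p^*=\sup(\Sigma_3\cup\Sigma_4)$ formulation) are all accurate and add nothing inconsistent with the paper's setup.
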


This corollary shows that, for each small $\epsilon>0$, there exists some $p\in [P_2, P_1]$ such that
Case 2 happens for the equation in \eqref{IVP}.   Now we give some a priori (uniform in $\epsilon$) estimates for these solutions, and then take limit as $\epsilon \to 0$ to obtain a solution to \eqref{ODE}.

\begin{lem}\label{lem:varphi''>0}
Assume $\varphi (z;p)$ is a solution of \eqref{IVP}  for some $p\in \Sigma_2$. Then $\varphi''(z;p)>0$ in $[0,1]$.
\end{lem}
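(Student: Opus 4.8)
The plan is to reuse the substitution from the proof of Lemma~\ref{lem:small and large p}: pass to $\psi:=\arctan\varphi'$, rewrite the equation as a \emph{linear} first-order relation for $\psi'=\varphi''/(1+(\varphi')^2)$, and integrate it with an integrating factor. The feature that makes the argument go through is that, since $p\in\Sigma_2$ means we are in Case~2, the slope $\varphi'$ is nonnegative on the \emph{entire} interval $[0,1]$ (it vanishes only at $z=0$ and is strictly positive on $(0,1]$); this is exactly what produces a source term of the correct sign.

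First I would record the consequences of $p\in\Sigma_2$. By the description of Case~2, $\varphi$ reaches the left edge $z=0$ with $\varphi(0)\in(0,1)$, $\varphi'(0)=0$, and $\varphi'(z)>0$ for $z\in(0,1]$; hence $\varphi'\ge 0$ on $[0,1]$. Moreover $p>0$: by Lemma~\ref{lem:small and large p}(ii) the set $\Sigma_3\cup\Sigma_4$ is open and contains $[0,P_2]\ni 0$, hence it contains a neighbourhood of $0$ disjoint from $\Sigma_2$. Evaluating the equation in \eqref{IVP} at $z=0$ and using $\varphi'(0)=0$ gives the seed estimate $\varphi''(0)=p\,\varphi(0)>0$.

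Now put $\psi(z):=\arctan\varphi'(z)$, so $\psi'=\varphi''/(1+(\varphi')^2)$ has the same sign as $\varphi''$ and, as in \eqref{eq-psi},
\[
\psi''=-pz\,\varphi''-\frac{N-1}{z+\epsilon}\,\varphi''+\frac{N-1}{(z+\epsilon)^2}\,\varphi' .
\]
Substituting $\varphi''=(1+(\varphi')^2)\,\psi'$ turns this into
\[
\psi''+a(z)\,\psi'=b(z),\qquad a(z):=\Big(pz+\tfrac{N-1}{z+\epsilon}\Big)\big(1+(\varphi')^2\big),\quad b(z):=\tfrac{N-1}{(z+\epsilon)^2}\,\varphi' ,
\]
with $a\ge 0$ and $b\ge 0$ on $[0,1]$, the latter because $\varphi'\ge 0$ there; since $\epsilon>0$, the functions $a,b$ stay bounded near $z=0$, so $A(z):=\int_0^z a(s)\,ds$ is a well-defined $C^1$ function. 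Multiplying by $e^{A(z)}$ gives $\big(e^{A}\psi'\big)'=e^{A}b\ge 0$, so $e^{A(z)}\psi'(z)$ is nondecreasing on $[0,1]$; combined with $\psi'(0)=\varphi''(0)=p\,\varphi(0)>0$ this yields $e^{A(z)}\psi'(z)\ge p\,\varphi(0)>0$, hence $\psi'(z)>0$, hence $\varphi''(z)=(1+(\varphi')^2)\,\psi'(z)>0$ for all $z\in[0,1]$.

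The only genuinely delicate point — and the one spot where Case~2 is used — is the sign $b\ge 0$, which rests on $\varphi'$ being nonnegative on \emph{all} of $[0,1]$; in the other cases $\varphi'$ changes sign, and that is exactly why $\varphi''>0$ fails there. (As a byproduct the argument also gives $\psi'(1)>0$, i.e.\ $\varphi''(1)>0$, which sharpens Corollary~\ref{cor:Case-2} to $\Sigma_2\subset(\tfrac{(N-1)k}{(1-k)(1+\epsilon)},\,P_1]$.) An equivalent route avoiding the integrating factor is a first-zero argument: if $\bar z\in(0,1]$ were the smallest point with $\varphi''(\bar z)=0$, then $\psi'(\bar z)=0$, so on one hand $\psi''(\bar z)\le 0$, while on the other the displayed identity forces $\psi''(\bar z)=b(\bar z)=\tfrac{N-1}{(\bar z+\epsilon)^2}\varphi'(\bar z)\ge 0$, a contradiction when $N\ge 2$; and when $N=1$ one has $b\equiv 0$, so $\psi'$ solves a homogeneous linear ODE and cannot vanish once $\psi'(0)>0$.
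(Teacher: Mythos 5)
Your proof is correct, and your primary route (the integrating‐factor argument) is a genuine, slightly different approach from the paper's. The paper runs a first‐zero contradiction: if $z_5$ is the smallest point in $(0,1]$ with $\varphi''(z_5)=0$, then $\psi'(z_5)=0$ while the identity \eqref{eq-psi} forces $\psi''(z_5)=\tfrac{N-1}{(z_5+\epsilon)^2}\varphi'(z_5)>0$, making $z_5$ a strict local minimum of $\psi$ and hence of $\varphi'$, which contradicts $\varphi''>0$ on $(0,z_5)$. Your ``equivalent route'' at the end is essentially this same argument. Your main argument instead rewrites \eqref{eq-psi} as the linear first-order ODE $\psi''+a\psi'=b$ with $a,b\ge 0$, multiplies by $e^A$, and concludes that $e^A\psi'$ is nondecreasing from the positive seed $\psi'(0)=\varphi''(0)=p\varphi(0)>0$. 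Both proofs hinge on the same structural fact: in Case~2 one has $\varphi'\ge 0$ on all of $[0,1]$, which gives the right sign of the source term $b$. What your version buys is a genuinely uniform treatment of the dimension: the paper's step ``$\psi''(z_5)>0$'' requires $N\ge 2$ (for $N=1$ one gets $\psi''(z_5)=0$ and needs to invoke uniqueness of the homogeneous problem, which the paper leaves implicit), whereas the monotonicity of $e^A\psi'$ needs only $b\ge 0$ and therefore covers $N=1$ and $N\ge 2$ in one stroke. You also take care to justify the implicit hypothesis $p>0$ (needed so that $\varphi''(0)=p\varphi(0)>0$) via $0\in[0,P_2]\subset\Sigma_3\cup\Sigma_4$, which the paper's proof uses without comment.
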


\begin{proof}
Taking $z=0$ in the equation we have $\varphi'' (0) = p \varphi (0) >0 $, and so $\varphi' (z)$ is monotonically increasing near $z=0$. If $\varphi'' (z_5) =0$ for some $z_5\in (0,1]$ (without loss of generality, assume $z_5$ is the smallest one of such points in $(0,1]$). Then $\varphi'' (z) >0$ in $(0,z_5)$.

Using  $\psi := \arctan \varphi'$ and the equation in \eqref{eq-psi} at $z=z_5$ we have
$\psi'' (z_5) >0$ since $\varphi'(z_5)>0$. Combining with $\psi' (z_5) = \varphi''(z_5 ) =0$ we see that $\psi$ takes a strict local minimum at $z=z_5$. So is $\varphi ' =\tan \psi$.  This, however, implies that $\varphi'(z)>\varphi'(z_5)$ for $z$ with $0<z_5 -z \ll 1$, contradicting the above conclusion $\varphi''(z) >0$ in $(0,z_5)$.
\end{proof}

By this lemma we have the following a priori estimates.

\begin{lem}\label{lem:est-varphi-epsilon}
Let $\varphi(z)= \varphi(z;p)$ be the solution of \eqref{IVP} with $p\in \Sigma_2$. Then
\begin{itemize}
\item[(i)]  $0< \varphi (z)\leq 1 $ and  $0\leq \varphi' (z) \leq k$ for $z\in [0,1]$;
\item[(ii)] for any $\delta\in (0,1)$ and any integer $m\geq 2$, there exists a positive constant $C= C(\delta, m, N)$  (independent of $\epsilon$) such that
\begin{equation}\label{kth-order-est}
|\varphi^{(m)} (z) | \leq C(\delta,m, N),\quad z\in [\delta, 1].
\end{equation}
\end{itemize}
\end{lem}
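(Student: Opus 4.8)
Part (i) should come essentially for free from Lemma~\ref{lem:varphi''>0}. Since $p\in\Sigma_2$, Case 2 occurs, so $\varphi$ is defined on all of $[0,1]$, with $\varphi'(0)=0$, $\varphi(0)\in(0,1)$, and (from the initial data in \eqref{IVP}) $\varphi(1)=1$, $\varphi'(1)=k$. By Lemma~\ref{lem:varphi''>0} we have $\varphi''>0$ on $[0,1]$, hence $\varphi'$ is strictly increasing there; together with $\varphi'(0)=0$ and $\varphi'(1)=k$ this gives $0\le\varphi'(z)\le k$ on $[0,1]$. In particular $\varphi'\ge0$, so $\varphi$ is nondecreasing, whence $0<\varphi(0)\le\varphi(z)\le\varphi(1)=1$ on $[0,1]$. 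This settles (i) with no further work.

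For (ii) the plan is a bootstrap, i.e.\ an induction on $m$ starting from the $L^\infty$ bounds of (i). First I would record, using Corollary~\ref{cor:Case-2}, that $p\in[P_2,P_1]$, an interval whose endpoints are independent of $\epsilon$; put $\bar p:=P_1$. Rewriting the equation of \eqref{IVP} as
\[
\varphi''(z)=\bigl(1+[\varphi'(z)]^2\bigr)\,F(z),\qquad
F(z):=p\bigl[\varphi(z)-z\varphi'(z)\bigr]-\frac{N-1}{z+\epsilon}\,\varphi'(z),
\]
the point is that on $[\delta,1]$ the singular coefficient and all of its $z$-derivatives are bounded \emph{uniformly in $\epsilon\in(0,1)$}: $\bigl|\frac{d^{l}}{dz^{l}}(z+\epsilon)^{-1}\bigr|=l!\,(z+\epsilon)^{-l-1}\le l!\,\delta^{-l-1}$ for $z\ge\delta$. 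The base case $m=2$ is then a direct estimate: from (i), $|\varphi|\le1$ and $|\varphi'|\le k$ on $[0,1]$, so with $|z|\le1$ and $p\le\bar p$ one gets $|\varphi''(z)|\le(1+k^2)\bigl(\bar p(1+k)+\tfrac{N-1}{\delta}\,k\bigr)$ for $z\in[\delta,1]$.

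For the inductive step, assuming $|\varphi^{(j)}|\le C(\delta,j,N)$ on $[\delta,1]$ for every $2\le j\le m-1$, I would differentiate the displayed identity $m-2$ times and expand by the Leibniz rule: $\varphi^{(m)}$ is then a finite sum of products, each built from a universal combinatorial constant, a polynomial in $\varphi',\dots,\varphi^{(m-1)}$ coming from $1+[\varphi']^2$, and a linear combination of $\varphi,\varphi',\dots,\varphi^{(m-1)}$ with coefficients among $1$, $p$, $pz$ and $(N-1)\frac{d^{l}}{dz^{l}}(z+\epsilon)^{-1}$ with $0\le l\le m-2$. Every factor is bounded on $[\delta,1]$ — the derivatives of $\varphi$ up to order $m-1$ by the inductive hypothesis and (i), the coefficients by $\bar p$, $0\le z\le1$ and the bound above on the derivatives of $(z+\epsilon)^{-1}$ — so $|\varphi^{(m)}(z)|\le C(\delta,m,N)$ on $[\delta,1]$ with $C$ independent of $\epsilon$, which closes the induction. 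The only step requiring real care is precisely this uniform-in-$\epsilon$ control of the singular term away from $z=0$; it is also what forces $C(\delta,m,N)$ to deteriorate as $\delta\to0$, which is unavoidable in view of the genuine singularity of the limiting equation \eqref{ODE} at the origin. Beyond that the argument is pure bookkeeping once (i) and the $\epsilon$-free bound $p\le P_1$ are in hand.
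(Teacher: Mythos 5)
Your proposal is correct and follows essentially the same route as the paper: part (i) from $\varphi''>0$ (Lemma~\ref{lem:varphi''>0}) together with the endpoint data, and part (ii) by bounding $\varphi''$ on $[\delta,1]$ uniformly in $\epsilon$ using $p\le P_1$ and $(z+\epsilon)^{-1}\le\delta^{-1}$, then differentiating the equation. The only cosmetic difference is that the paper bounds $p(\varphi-z\varphi')$ via its monotonicity (a consequence of $\varphi''>0$) while you use the cruder but equally sufficient bound $p(|\varphi|+|z\varphi'|)\le P_1(1+k)$.
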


\begin{proof}
(i). The conclusions follow from the previous lemma.

(ii). From the previous lemma we see that $p(\varphi - z\varphi')$ is strictly decreasing, and so
$$
P_2 (1-k)\leq p(1-k) \leq p  (\varphi  (z)- z\varphi'  (z))  \leq p\varphi(0)\leq P_1 ,\quad  z\in [0, 1],
$$
since $p\in \Sigma_2 \subset [P_2, P_1]$. In addition,
$$
0<\frac{N-1}{z+\epsilon} \varphi'  (z)  \leq \frac{N-1}{\delta} k,\quad z\in [\delta, 1].
$$
Hence, using the equation of $\varphi$ we then obtain the (uniform in $\epsilon$) estimate for $\varphi'' $
in $[\delta,1]$. Differentiating the equation $(m-2)$-times we can obtain the estimates for $\varphi^{(m)} $ in $[\delta, 1]$.
\end{proof}

Based on the above results we now make $\epsilon$ change and take limit as $\epsilon \to 0$.
For each small $\epsilon >0$, we select one $p\in \Sigma_2 \subset [P_2, P_1]$,  denote it by $p_\epsilon$,
and denote the corresponding solution $\varphi(z; p_\epsilon)$ of \eqref{IVP} simply by $\varphi_\epsilon (z) $, which is the solution in Case 2. Now we consider the limit of $\varphi_\epsilon$ as $\epsilon \to 0$.
Using the estimates in the above lemma and using the Cantor's diagonal argument we can find
a sequence $\{\epsilon_i\}$ decreasing to $0$, a parameter $P \in [P_2, P_1]$ and a function $\Phi\in C([0,1]) \cap C^\infty((0,1])$ such that
\begin{equation}\label{converge-sequence}
p_{\epsilon_i} \to P,\quad \|\varphi_{\epsilon_i} -  \Phi \|_{C([0,1])} \to 0,\quad \|\varphi_{\epsilon_i} -  \Phi \|_{C^m ([\delta,1])} \to 0
\mbox{ as } i\to \infty,
\end{equation}
for any integer $m\geq 1$ and any $\delta \in (0,1)$. Moreover,
\begin{equation}\label{1st-2nd-Phi}
0\leq \Phi(z)\leq 1,\quad 0\leq \Phi'(z) \leq k,\quad \Phi''(z)\geq 0, \quad z\in (0,1]
\end{equation}
by the previous results. Therefore, $\Phi$ satisfies the problem \eqref{ODE} with $p=P$, except for $\varphi'(0)=0$.

%%%%%%%%%%%%%%
Finally we can prove the main result in this subsection.

\begin{prop}\label{prop:exist-self-similar}
For any given $k\in (0,1)$, there exists a unique $P\in [P_2, P_1]$ such that the
problem \eqref{ODE} with $p=P$ has a solution.

Moreover, the unique solution, denoted by $\varphi= \Phi(z;k)  \in C^\infty ([0,1])$ satisfies
\begin{equation}\label{1st-2nd-Phi-strict}
0< \Phi(z;k) <1,\ \  0 <\Phi'(z;k) <k \mbox{ in } (0,1) ,\mbox{\ \ and \ \ } \Phi'' (z;k)>0 \mbox{ in } [0,1].
\end{equation}
\end{prop}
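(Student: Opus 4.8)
The plan is to take $\Phi(\cdot;k)$ to be a limit function as in \eqref{converge-sequence}--\eqref{1st-2nd-Phi}, so that $\Phi\in C([0,1])\cap C^\infty((0,1])$ already solves the equation in \eqref{ODE} on $(0,1]$ with $\Phi(1)=1$, $\Phi'(1)=k$ and $0\le\Phi\le1$, $0\le\Phi'\le k$, $\Phi''\ge0$ on $(0,1]$; the only missing relation is $\Phi'(0)=0$. To get it, note that $\Phi''\ge0$ makes $\Phi'$ nondecreasing on $(0,1]$, so $L:=\lim_{z\to0^+}\Phi'(z)\ge0$ exists. If $N\ge2$ and $L>0$, then in the equation $P[\Phi-z\Phi']$ stays bounded while $\frac{N-1}{z}\Phi'(z)\to+\infty$, so the right-hand side tends to $-\infty$, contradicting that the left-hand side is $\ge0$; hence $L=0$, and then $\frac1z\int_0^z\Phi'\to0$ shows $\Phi$ is differentiable at $0$ with $\Phi'(0)=0$ (for $N=1$ the equation is already regular at $z=0$). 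Since $\frac{d}{dz}(\Phi-z\Phi')=-z\Phi''\le0$, the function $\Phi-z\Phi'$ is nonincreasing with value $1-k$ at $z=1$, so $\Phi(0)\ge1-k>0$. This proves existence of $P$, and $P\in[P_2,P_1]$ by \eqref{converge-sequence} and Corollary~\ref{cor:Case-2}.

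Next I would establish smoothness up to $z=0$ and the strict inequalities \eqref{1st-2nd-Phi-strict}. Rewriting the equation as $\Phi''+[1+(\Phi')^2]\tfrac{N-1}{z}\Phi'=[1+(\Phi')^2]P[\Phi-z\Phi']$ presents it as a regular-at-the-origin perturbation of the radial Laplacian with smooth right-hand side and $\Phi'(0)=0$; standard interior regularity for radial solutions then gives $\Phi\in C^\infty([0,1])$, and matching the lowest-order terms of the Taylor expansion at $z=0$ (which involves only even powers) yields $N\Phi''(0)=P\Phi(0)$, hence $\Phi''(0)=P\Phi(0)/N>0$ (here $P>0$: for $N\ge2$, $P\ge P_2>0$; for $N=1$, $P=0$ would force $\Phi$ affine with $\Phi'(0)=k\ne0$). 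For $z_0\in(0,1]$ with $\Phi''(z_0)=0$: if $\Phi'(z_0)>0$, then with $\psi:=\arctan\Phi'$ one has $\psi'(z_0)=0$ and, by \eqref{eq-psi} with $\epsilon=0$, $\psi''(z_0)=\tfrac{N-1}{z_0^2}\Phi'(z_0)>0$, so $\psi$ has a strict local minimum at $z_0$, contradicting that $\psi'=\Phi''/(1+(\Phi')^2)\ge0$ makes $\psi$ nondecreasing (if $z_0=1$ this instead gives $\Phi''<0$ just to the left of $1$); if $\Phi'(z_0)=0$, then $\Phi'\equiv0$ and $\Phi\equiv\Phi(0)$ on $[0,z_0]$, which forces $P\Phi(0)=0$, impossible. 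Hence $\Phi''>0$ on $[0,1]$, so $\Phi'$ increases strictly from $0$ to $k$ and $\Phi$ increases strictly from $\Phi(0)\in(0,1)$ to $1$, which is \eqref{1st-2nd-Phi-strict}.

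The core of the argument is uniqueness of $P$. Suppose $p_1<p_2$ both admit solutions $\varphi_1,\varphi_2$ of \eqref{ODE}. By the argument of Lemma~\ref{lem:varphi''>0} (the condition $\varphi_i'(0)=0$ puts $p_i$ in the $\epsilon=0$ analogue of $\Sigma_2$) and the previous paragraph, each $\varphi_i$ is smooth on $[0,1]$ with $\varphi_i''>0$ on $[0,1]$, $\varphi_i(0)\ge1-k>0$, and hence $\varphi_i-z\varphi_i'\ge1-k>0$ on $[0,1]$. Put $g:=\varphi_2-\varphi_1$ and write $\arctan\varphi_2'-\arctan\varphi_1'=\mu(z)\,g'$, where $\mu(z):=\int_0^1\tfrac{dt}{1+(\varphi_1'+t\,g')^2}\in(0,1]$ is smooth. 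Subtracting the two copies of the equation in \eqref{ODE} gives, on $(0,1)$,
\[
\mu\,g''+\Big(\mu'+p_2 z+\tfrac{N-1}{z}\Big)g'=p_2\,g+(p_2-p_1)\big(\varphi_1-z\varphi_1'\big),
\]
together with $g(1)=g'(1)=0$ and $g''(1)=(1+k^2)(p_2-p_1)(1-k)>0$. Consequently, at any $z_0\in(0,1)$ with $g'(z_0)=0$ we have $\mu(z_0)g''(z_0)=p_2 g(z_0)+(p_2-p_1)\big(\varphi_1(z_0)-z_0\varphi_1'(z_0)\big)>p_2 g(z_0)$, so $g$ can have no nonnegative interior local maximum.

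From this I would close as follows: since $g>0$ just to the left of $z=1$, a first return of $g$ to $0$ inside $(0,1)$ would create an interior positive maximum, which is excluded; hence $g>0$ on all of $(0,1)$ and $g(0)\ge0$. On the other hand, from $\varphi_i(z)=\varphi_i(0)+\tfrac{p_i\varphi_i(0)}{2N}z^2+o(z^2)$ we get $g(z)=g(0)+\tfrac{p_2\varphi_2(0)-p_1\varphi_1(0)}{2N}z^2+o(z^2)$, and $p_2\varphi_2(0)>p_1\varphi_1(0)$ because $\varphi_2(0)\ge\varphi_1(0)\ge1-k>0$; therefore $g(z)>g(0)$ for small $z>0$. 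Together with $g(1)=0$ this forces $\sup_{[0,1]}g$ to be attained at an interior point of $(0,1)$ — contradicting the absence of interior maxima. Hence $p_1=p_2$, i.e.\ $P$ is unique. I expect this uniqueness step to be the main obstacle: the real work is to extract a sign-definite comparison between two self-similar profiles travelling at different speeds and then to close the argument against the boundary data pinned simultaneously at $z=0$ and $z=1$.
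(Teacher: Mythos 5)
Your existence argument parallels the paper's (limit of the $\epsilon$-regularized solutions, then verify $\Phi'(0)=0$), with two small differences: you obtain $\Phi'(0)=0$ via the blow-up of $\tfrac{N-1}{z}\Phi'$ against the monotone limit $L$, whereas the paper argues by contradiction along a sequence $z_j\to 0$; and your regularity statement at $z=0$ (``standard interior regularity for radial solutions'') compresses what the paper does more carefully by passing to the $N$-dimensional form $W(\tilde x)=\Phi(|\tilde x|)$ and invoking $L^p$/Schauder theory. That compression is the one place I would ask you to expand: the relevant quasilinear operator is only equivalent to a perturbation of the radial Laplacian after multiplying through by $1+(\Phi')^2$, and the clean way to control the singular coefficient at the origin is indeed to lift back to the PDE on $\overline{B_1}\subset\R^N$. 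Also, your parenthetical for $N=1$ is not a proof of $\Phi'(0)=0$ in that case; there the $\epsilon$-regularization is vacuous, so one should note that $\Sigma_2$ already consists of values $p$ with $\varphi'(0;p)=0$, and the $C^m$ bounds extend to $[0,1]$, so the conclusion is immediate. (Incidentally, your relation $N\Phi''(0)=P\Phi(0)$ is the correct one; the paper writes $\Phi''(0)=P\Phi(0)$, a harmless slip.)

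The interesting divergence is the uniqueness step, where you take a genuinely different route. The paper's proof is a ``last-crossing'' phase-plane argument on the derivatives: comparing $\varphi''(1;p_1)$ with $\Phi''(1)$, taking $\hat z=\max\{z<1:\varphi'(z;p_1)=\Phi'(z)\}$, and deriving an order contradiction at $\hat z$ from the equation and the strict positivity of $\Phi-z\Phi'$. You instead linearize: set $g=\varphi_2-\varphi_1$, factor $\arctan\varphi_2'-\arctan\varphi_1'=\mu(z)g'$, obtain a linear second-order ODE for $g$ with a strictly positive forcing term $(p_2-p_1)(\varphi_1-z\varphi_1')$, and then run a one-dimensional maximum principle: $g$ can have no nonnegative interior maximum, yet the boundary data $g(1)=g'(1)=0$, $g''(1)>0$ and the Taylor expansion at $0$ (using $N\varphi_i''(0)=p_i\varphi_i(0)$) force the supremum into the interior. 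I checked the computation, including $g''(1)=(1+k^2)(p_2-p_1)(1-k)>0$ and the coefficient comparison at $z=0$, and it is correct. Your version has the advantage of making the monotone dependence on $p$ structurally transparent (it is a genuine comparison-principle argument for the linearized operator), at the cost of more bookkeeping ($\mu$, $\mu'$, the forced ODE for $g$); the paper's crossing argument is shorter and stays entirely at the level of the nonlinear ODE. Both close the uniqueness, and both rely essentially on the same two facts you isolate: $\varphi_i''>0$ and $\varphi_i-z\varphi_i'>0$ on $[0,1]$.
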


\begin{proof}
Let $P$ and $\Phi (z)\in C([0,1]) \cap C^\infty ((0,1])$ be number and the function obtained in \eqref{converge-sequence}.

(1) First we consider the smoothness of $\Phi$ at $z=0$ by using the equation \eqref{self-w-eq} instead of \eqref{ODE}. Set
$$
W(\tilde{x})  :=  \Phi(z) \mbox{ with } z= |\tilde{x}| \in [0,1],
$$
then $W(\tilde{x})$ satisfies the quasilinear elliptic equation \eqref{self-w-eq} with $p=P$ in $D_0 := \{\tilde{x}\in \R^N \mid 0< |\tilde{x}| \leq 1\}$ and
$$
W(\tilde{x})  =1 \mbox{ on } \partial D_0,\quad 0\leq W(\tilde{x}) \leq 1 \mbox{ and } D_i W = \Phi'(z)  \frac{\tilde{x}_i}{|\tilde{x}|} \mbox { in } D_0.
$$
Therefore, both $W$ and $D_i W$ are bounded in $D_0$ by \eqref{1st-2nd-Phi}.  Using the standard $L^p$ theory for the elliptic equation
\eqref{self-w-eq} we see that $W\in W^2_q (D_0)$ for any $q>1$. Given $\mu \in (0,1)$,  when $q$ is large, $W^2_q (D_0)$ is embedded
into $C^{1+\mu} (\overline{D_0})$. Hence, by the Schauder theory we have $\|W\|_{C^{2+\mu} (\overline{D_0})} \leq C$.
Furthermore, by the standard regularity method we see that $W\in C^\infty (\overline{D_0})$, and for any positive integer $m$,
there exists $C(m)>0$ such that $\|W\|_{C^m (\overline{D_0}) } \leq C(m)$.  Consequently, we have
$$
\Phi \in C^\infty ([0,1]), \quad \|\Phi\|_{C^m ([0,1])} \leq C(m).
$$

(2). Next we show $\Phi'(0)=0$.  By contradiction we assume that for a sequence $\{z_j\}$ decreasing to $0$, $\Phi'(z_j) \geq \delta$ for some $\delta\in (0,k]$. Then using the equation of $\varphi$ we have
$$
\frac{\Phi''(z_j)}{1+ [\Phi'(z_j)]^2} = P(\Phi (z_j)-z_j \Phi' (z_j) )-\frac{N-1}{z_j}\Phi' (z_j) \leq P -\frac{N-1}{z_j} \delta <0,
$$
for sufficiently large $j$. This contradicts $\Phi''(z)\geq 0$ in \eqref{1st-2nd-Phi}. Hence $\Phi' (0)=0$ and so $\Phi$ is a smooth solution of \eqref{ODE}.

(3). We now prove the strict inequalities in \eqref{1st-2nd-Phi-strict}. Consider the equation for $W(\tilde{x})$ in the first step again.
Using the strong maximum principle we have $W(\tilde{x}) >0$ for all $\tilde{x}\in \overline{D_0}$. Thus $\Phi(0)>0$ and
$\Phi''(0)= P\Phi(0)>0$. Using a similar argument as in Lemma \ref{lem:varphi''>0} one can show that $\Phi''(z)>0$ in $[0,1]$, and so
\eqref{1st-2nd-Phi-strict} follows easily.

Finally, we prove the uniqueness. Assume by contradiction that the problem \eqref{ODE} with
$p=p_1\in [P_2, P_1]\backslash \{P\}$ also has a solution $\varphi(z; p_1)$.
Assume further that $p_1 >P$ (the case $p_1 <P$ is proved similarly). Taking $z=1$ in the
equations we find that $\varphi''(1;p_1) > \Phi''(1;k)$, and so
$\varphi'(z;p_1) < \Phi'(z;k)$ for $z$ satisfying $0<1-z \ll 1$ since $\varphi'(1;p_1) = \Phi'(1;k)=k$. Set
$$
\hat{z} := \max\{z \mid 0\leq z<1, \varphi'(z;p_1) = \Phi'(z;k)\}.
$$
(The set in the right hand side is non-empty since $0$ belongs to it.) Then
\begin{equation}\label{varphi-Phi}
\varphi(\hat{z};p_1) > \Phi(\hat{z};k),\quad \varphi' (\hat{z};p_1) =  \Phi' (\hat{z};k)
\mbox{\ \ and\ \ } \varphi'' (\hat{z};p_1) \leq  \Phi'' (\hat{z};k).
\end{equation}
Using the first inequality, the second equality, our assumption $p_1 >P$ and the fact
$\Phi(z;k)-z\Phi'(z;k)>0$ (by \eqref{1st-2nd-Phi-strict}) we conclude that
$$
p_1 [\varphi(\hat{z};p_1) - \hat{z} \varphi'(\hat{z};p_1)] > p_1 [\Phi(\hat{z};k)-\hat{z} \Phi'(\hat{z};k)]
> P[ \Phi(\hat{z};k)-\hat{z} \Phi'(\hat{z};k)].
$$
Substituting this result into the equations of $\varphi(z;p_1)$ and $\Phi(z;k)$ we have
$\varphi''(\hat{z};p_1) > \Phi''(\hat{z};k)$, contradicting the third inequality of \eqref{varphi-Phi}.
This proves the uniqueness.
\end{proof}

\begin{remark}\label{rem:existence-selfsimilar}
\rm As we can see from above that the proof for the existence of solutions to \eqref{ODE} is
complicated due to the presence of the term $(N-1)\varphi'(z)/z$. Our approach is
different from and much more complicated than the $N=1$ case as in \cite{CGK, GH, Koh} etc.
\end{remark}

%%%%%%%%%%%%%%%%%%
%%%%%%%%%%%%%%%%%%%
%%%%%%%%%%%%%%%%%%%
\subsection{Comparison principle and a priori estimates for $u$ and $u_r$}
Since the graph of $u(x,t)$ lies in $\Omega$ and has different definition domains
for different $t$, it is convenient to introduce a new notation to compare them.

Assume, for $i=1$ and $2$, $w_i(r)$ are positive functions defined in $[0,\xi_i]$
with $w_i (\xi_i) =\xi_i$, then we write
$$
w_1 (r) \preceq  w_2 (r),
$$
if $\xi_1 \leq \xi_2$ and $w_1 (r)\leq w_2 (r)$ in $[0,\xi_1]$.

\begin{defn}
For $i=1,2$, let $u_i (r,t)$ be two functions defined in $\{(r,t)\mid 0\leq r \leq \xi_i(t),\ t>0\}$ such that
$u_i (\xi_i(t),t) =\xi_i(t)$. Then $u_1$ is called a lower solution of \eqref{p}-\eqref{boundary1} if
\begin{equation}\label{u1}
\left\{
 \begin{array}{l}
\displaystyle u_{1t} \leq {\frac { u_{1rr}}  { 1+ u^2_{1r}}} + \frac{(N-1)u_{1r}}{r}, \qquad  0< r< \xi_1(t),\ t>0,\\
u_{1r} (0, t) \geq  0,\quad u_{1r} (\xi_1(t),t) \leq k( u_1(\xi_1(t),t)) , \qquad t>0.
\end{array}
\right.
\end{equation}
$u_2$ is called an upper solution of \eqref{p}-\eqref{boundary1} if the opposite inequalities hold.
\end{defn}

\begin{lem}\label{comparison}
      Assume $u_1 (r,t)$ defined for $r\in [0,\xi_1(t)],\ t>0$ and $u_2(r,t)$ defined for $r\in [0,\xi_2(t)],\ t>0$ are lower and upper solutions of \eqref{p}-\eqref{boundary1}, respectively.
      If $u_1 (r,0) \preceq u_2 (r,0)$, then $u_1 (r, t) \preceq u_2 (r,t)$ for $t>0$.
\end{lem}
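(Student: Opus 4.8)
The plan is to use the classical sliding/comparison argument for scalar parabolic problems, adapted to the fact that the two solutions live on moving (and a priori different) spatial domains. First I would introduce the two free boundaries $\xi_1(t)\le\xi_2(t)$ (at $t=0$ this is part of the hypothesis $u_1(\cdot,0)\preceq u_2(\cdot,0)$) and work with the function $w(r,t):=u_2(r,t)-u_1(r,t)$, which at each time $t$ is defined on the common domain $[0,\xi_1(t)]$. The goal is to show $w\ge 0$ and $\xi_1(t)\le\xi_2(t)$ persist for all $t>0$. Because the problem is degenerate/quasilinear only in its dependence on gradients, on any interval where $u_1,u_2$ are smooth the difference $w$ satisfies a linear parabolic inequality with bounded coefficients: writing the right-hand side of \eqref{p} as $F(u_{rr},u_r,r)$, the mean value theorem gives
\[
  w_t \;\ge\; a(r,t)\,w_{rr} + b(r,t)\,w_r + c(r,t)\,w
\]
for $0<r<\xi_1(t)$, where $a=1/(1+\theta^2)>0$ for an intermediate value $\theta$ between $u_{1r}$ and $u_{2r}$ (uniform parabolicity), $b$ comes from both the $u_r u_{rr}$-nonlinearity and the $(N-1)u_r/r$ term, and $c=0$ in fact since $F$ does not depend on $u$ itself. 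One subtlety is the apparent singularity of $b$ at $r=0$; this is handled by the Neumann-type condition there, or by noticing the term $(N-1)w_r/r$ is itself a legitimate (non-singular) contribution to an $N$-dimensional radial Laplacian, so one may phrase the whole comparison in terms of the radial function $W(\tilde x,t)$ on a ball in $\mathbb R^N$ and invoke the standard maximum principle in that setting.

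The heart of the argument is then a continuity/contradiction scheme. Suppose the conclusion fails and let
\[
  t_* := \sup\{\, t\ge 0 : \xi_1(s)\le\xi_2(s)\ \text{and}\ u_1(\cdot,s)\preceq u_2(\cdot,s)\ \text{for all }s\le t\,\},
\]
which by hypothesis and continuity is positive (or one argues directly that it cannot be finite). Near $t_*$ I would perturb the upper solution by $u_2^\eta := u_2 + \eta(1+t)$ (or a similar strictly-supersolution perturbation that respects the boundary data up to the right sign, using that $k$ is bounded and that the perturbation only helps the inequalities at $r=0$ and at $r=\xi_2$); this makes the parabolic inequality strict and removes boundary touching. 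For $w^\eta:=u_2^\eta-u_1>0$, a first touching point $(r_0,t_0)$ with $t_0\le t_*$ must occur: it cannot be an interior point of the parabolic boundary-value problem by the strong maximum principle (strict inequality), it cannot be at $r=0$ because $u_{1r}(0,t)\ge0\ge u_{2r}^\eta(0,t)$ forces $w^\eta_r(0,t_0)\le0$ contradicting Hopf's lemma at an interior minimum, and it cannot be at the right end $r=\xi_1(t_0)$ — there one compares the two oblique boundary conditions: at such a touching either $\xi_1(t_0)=\xi_2(t_0)$, in which case the gradient inequalities $u_{1r}(\xi_1,t_0)\le k(u_1(\xi_1,t_0))=k(u_2^\eta$-value$)\le u_{2r}^\eta(\xi_2,t_0)$ together with Hopf's lemma give a contradiction, or $\xi_1(t_0)<\xi_2(t_0)$, in which case $u_2^\eta(\xi_1(t_0),t_0)>\xi_1(t_0)=u_1(\xi_1(t_0),t_0)$, so $w^\eta>0$ there and no touching happens. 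Letting $\eta\to0$ recovers $w\ge0$, i.e. $u_1(\cdot,t)\le u_2(\cdot,t)$ on $[0,\xi_1(t)]$; and this in turn forces $\xi_1(t)\le\xi_2(t)$, since at $r=\xi_1(t)$ one has $u_2(\xi_1(t),t)\ge u_1(\xi_1(t),t)=\xi_1(t)$, and the graph of $u_2$ meets the cone wall $y=r$ only at $r=\xi_2(t)$ with $u_2$ staying above the wall for $r<\xi_2(t)$ (this monotonicity/geometry is where the admissibility of the data and $k<1$ enter).

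The main obstacle I anticipate is precisely the interplay at the free boundary: because $\xi_1$ and $\xi_2$ are not known to coincide, one cannot directly compare the two Neumann/oblique conditions pointwise, and one must carefully rule out the "crossing at the lateral boundary" scenario using the geometric constraint that both graphs sit inside the cone $\Omega$ and touch $\partial\Omega$ only at their own free boundary. Packaging this cleanly — e.g. by reducing to the case $\xi_1(t_0)=\xi_2(t_0)$ and then applying Hopf's lemma to the oblique derivative — is the step that needs the most care; the interior maximum-principle part and the handling of the coordinate singularity at $r=0$ are routine once the problem is viewed as a radial quasilinear equation on a Euclidean ball. I would also note that the regularity required of $u_1,u_2$ (continuity of $u,u_r$ up to the boundary, interior $C^{2,1}$) is exactly what the definition of classical sub/supersolution provides, so no additional smoothing is needed beyond the $\eta$-perturbation.
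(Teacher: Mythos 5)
Your approach is essentially the same as the paper's: the paper's own proof consists of the single remark that the lemma ``follows from the maximum principle directly,'' together with the suggestion to pass to the non-symmetric form \eqref{p01}--\eqref{p02} in $\mathbb{R}^N$ to avoid the $1/r$ singularity, which is exactly the device you also invoke. Your write-up is a detailed expansion of that terse argument, filling in the linearization of the difference, the perturbation/sliding scheme, and the case analysis at the free boundary that the paper leaves implicit.
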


This lemma follows from the maximum principle directly. To avoid the unboundedness of $1/r$
in the last term of \eqref{p}, one can adopt the non-symmetric form \eqref{p01}-\eqref{p02} instead of
the symmetric form \eqref{p}-\eqref{boundary1} to use the maximum principle.

\medskip

Now we give  a priori  estimate for $u$.  For any $T>0$, denote $Q_T := \{(r,t)\; |\; 0 < r < \xi (t)\ {\rm
and}\ 0< t \leq T\}$. Let $k_0,\ k^0$ be the real numbers defined in \eqref{g-value}.  Denote
$$
\underline{u} (r,t) := \sqrt{2Pt}\; \Phi \left( {\frac {r}{\sqrt {2Pt}}}; k_0 \right)  ,\quad \bar{u} (r,t):= \sqrt{2P^0 t}\; \Phi
\left( {\frac {r}{\sqrt{2P^0 t}}}; k^0\right) ,
$$
where $(P, \Phi(z;k_0)) $ and $(P^0, \Phi(z; k^0))$ are the solutions of \eqref{ODE} with $k=k_0$ and $k=k^0$, respectively, as obtained in Proposition \ref{prop:exist-self-similar}.
Assume
\begin{equation}\label{initial-height}
\underline{u} (r,t_0) \preceq  u_0 (r) \preceq \bar{u} (r,t^0),
\end{equation}
then, by the comparison principle we have
\begin{equation}\label{u-bound}
\underline{u} (r,t+t_0) \preceq u(r,t; u_0) \preceq \bar{u} (r,t+t^0)
\end{equation}
provided the solution $u(r,t;u_0)$ of \eqref{p}-\eqref{boundary1}-\eqref{initial} exists.

Next, we give the following gradient estimates.

\begin{lem}\label{lem:gradient}
    Let $u(r,t)$ be the classical solution of \eqref{p}-\eqref{boundary1}-\eqref{initial} in $[0,T)$. Then
\begin{itemize}
\item[(i)] $u_r$ is bounded:
    $$
     |u_r (r,t)| \leq G:= \max\{\|u'_0\|_{C}, k^0\} <1,    \quad (r,t)\in  \overline{Q_T};
    $$

\item[(ii)] Assume further that $u'_0(r)\geq 0$ for $r\in [0,\xi(0)]$, then $u_r(r,t)>0$ in $Q_T$.
\end{itemize}
\end{lem}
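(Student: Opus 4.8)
\emph{Proof proposal.} The plan is to obtain both bounds from the parabolic maximum principle applied to the gradient of $u$. Before differentiating the equation I would record two preliminary facts. First, since $u_0$ is admissible, $\|u'_0\|_C<1$ on the compact interval $[0,\xi(0)]$, and $k^0<1$ by \eqref{g-value}, so indeed $G<1$; once $|u_r|\le G$ is known, $\tfrac1{1+u_r^2}\ge\tfrac1{1+G^2}$ and the equation \eqref{p} is uniformly parabolic. Second, by standard interior parabolic (Schauder) regularity for the quasilinear equation \eqref{p}, whose coefficients depend smoothly on $u_r$, a classical solution is automatically $C^\infty$ in the open set $Q_T$, so one may differentiate freely there.

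\textbf{Part (i).} I would pass to the non-symmetric Cartesian form \eqref{p01}--\eqref{p02} to avoid the singular factor $1/r$. Writing $u_t=a_{ij}(Du)D_{ij}u$ with $a_{ij}(p):=\delta_{ij}-\tfrac{p_ip_j}{1+|p|^2}$ and differentiating in $x_k$, each $D_ku$ solves a linear parabolic equation with leading coefficients $a_{ij}(Du)$, a first-order term (with coefficients finite at interior points), and \emph{no} zeroth-order term. Summing squares, $w:=|Du|^2$ satisfies $w_t\le a_{ij}(Du)D_{ij}w+(\text{first order})$, because $(a_{ij})$ is positive definite, so $w$ is a subsolution of a parabolic equation without zeroth-order term. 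The weak maximum principle then forces $\max_{\overline{Q_T}}w$ onto the parabolic boundary, which in this picture consists only of the initial slice — where $w=|u'_0(r)|^2\le\|u'_0\|_C^2$ — and the lateral boundary $\{(x,|x|):x\in\partial\omega(t)\}$ — where, by radial symmetry and \eqref{boundary1}, $|Du|=|u_r(\xi(t),t)|=k(\xi(t))\le k^0$ by \eqref{g-value}; the centre of the ball $\omega(t)$ is an ordinary interior point. Hence $w\le G^2$, i.e. $|u_r|=|Du|\le G<1$.

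\textbf{Part (ii).} Assuming $u'_0\ge0$, I would work directly with $v:=u_r$, which for $0<r<\xi(t)$ solves $v_t=\tfrac{v_{rr}}{1+v^2}-\tfrac{2vv_r^2}{(1+v^2)^2}+\tfrac{(N-1)v_r}{r}-\tfrac{(N-1)v}{r^2}$, a linear equation whose zeroth-order coefficient $-\tfrac{N-1}{r^2}$ is $\le0$ (it vanishes when $N=1$). On the parabolic boundary $v\ge0$: $v(r,0)=u'_0(r)\ge0$, $v(0,t)=0$ by \eqref{boundary1}, and $v(\xi(t),t)=k(\xi(t))\ge k_0>0$ by \eqref{g-value}. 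Since the equation for $-v$ has zeroth-order coefficient $\le0$ and is invoked by the weak maximum principle only at interior points, where all coefficients are finite, we get $-v\le0$, i.e. $v\ge0$, on $\overline{Q_T}$. Finally, $v\ge0$ solves a linear parabolic equation with nonpositive zeroth-order coefficient and is continuous up to the lateral free boundary where $v=k(\xi(t))\ge k_0>0$; the strong maximum principle then forbids $v$ from vanishing at an interior point (such a zero would propagate $v\equiv0$ over a backward sub-cylinder, contradicting the positive lateral values by continuity), so $v=u_r>0$ in $Q_T$.

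\textbf{Main obstacle.} The genuine difficulty is the $1/r$ and $1/r^2$ singularities in the equation for $u_r$, which obstruct a naive application of the maximum principle on the whole closed cylinder $\overline{Q_T}$. My way around it is, for (i), to rewrite everything in the non-symmetric form \eqref{p01}--\eqref{p02}, where the centre of the ball is an interior point and $|Du|^2$ is a bona fide subsolution with no zeroth-order term; and, for (ii), to exploit that $v=u_r$ and $-v$ are pinned to $0$ along the axis $r=0$, so the maximum principle is only ever used at interior points with finite coefficients. The only supporting input is the interior $C^\infty$-regularity of $u$ needed to differentiate \eqref{p}, which is provided by standard parabolic estimates once $|u_r|<1$ makes the equation uniformly parabolic.
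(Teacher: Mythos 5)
Your proposal is correct, and part (i) takes a genuinely different route from the paper. The paper works directly with $\eta:=u_r$ in the radial variable, notes that the equation for $\eta$ has a $-(N-1)\eta/r^2$ zeroth-order term whose sign is favourable for bounds in both directions, and dodges the $1/r$, $1/r^2$ singularities by first applying the maximum principle on the truncated cylinder $Q^\ve_T=\{\ve<r<\xi(t),\ 0<t\le T\}$ (using $\eta(0,t)=0$ and continuity to ensure $|\eta(\ve,t)|\le\delta<G$) and then letting $\ve\to0$. Your approach instead passes to Cartesian variables, where $x=0$ is a genuine interior point, and applies the maximum principle to $w=|Du|^2$; this buys two simplifications at once — the zeroth-order term disappears entirely after squaring, and the $1/r$ singularity never arises, so no truncation-and-limit step is needed. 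The cost is that you need $D_{ij}u$ to be defined across $x=0$, which the paper's definition of classical solution does not explicitly provide at $r=0$ (only $u,u_r$ are required continuous there); you patch this by invoking interior parabolic regularity, which is standard and acceptable, whereas the paper's truncation sidesteps the point altogether and never invokes the equation at $r=0$. For part (ii) your argument is essentially the same as the paper's in spirit: exploit the favourable sign of $-(N-1)/r^2$, note that the Dirichlet datum $\eta(0,t)=0$ pins the axis, and observe that the maximum principle is only ever tested at interior points where $r>0$ and the coefficients are finite; the paper simply packages this as the same $Q^\ve_T$ truncation. Your additional strong-maximum-principle step to upgrade $u_r\ge0$ to $u_r>0$ in $Q_T$ (propagating a hypothetical interior zero backward across a horizontal slice to contradict the positive lateral value $k(\xi(t))\ge k_0$) is correct and supplies the strict inequality that the paper leaves implicit in ``proved similarly.''
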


\begin{proof}
From the problem of $u$ we obtain the problem for $\eta:= u_r$:
$$
\left\{
 \begin{array}{ll}
 \displaystyle \eta_t = \frac{\eta_{rr}}{1+\eta^2} - \frac{2\eta}{(1+\eta^2)^2} \eta_r^2
 +(N-1)\frac{\eta_r}{r} - (N-1)\frac{\eta}{r^2}, & 0<r<\xi(t), 0<t\leq T,\\
 \eta(0,t)=0,\ \eta(\xi(t),t)= k(u(\xi(t),t)), & 0<t\leq T,\\
 \eta(r,0)= u'_0(r), & 0\leq r\leq \xi(0).
 \end{array}
 \right.
$$
To exclude the singularity caused by $1/r$ and $1/r^2$,  one can first consider the problem in
smaller domains. More precisely, for any small $\delta \in (0,G)$, by $\eta(0,t)=0$ and by the continuity of $\eta$,
there exists a small $\varepsilon>0$ such that
$$
-\delta \leq \eta(\varepsilon, t) \leq \delta,\quad t\in [0,T].
$$
Then, using the maximum principle for $\eta$ in the domain $Q^\varepsilon_T :=\{(r,t) \mid \varepsilon<r<\xi(t), 0<t\leq T\}$
we conclude that $|\eta (r,t)| \leq G$ in $Q^\varepsilon_T$. Taking limit as $\varepsilon\to 0$ we obtain the first conclusion.
The second conclusion can be  proved similarly.
\end{proof}

%%%%%%%%%%%%%%%%
%%%%%%%%%%%%%%%%%
%%%%%%%%%%%%%%%%%
\subsection{Convert the problem into a fixed domain}

Even with the a priori estimates obtained above, to study the local or global existence
of the solution to the problem \eqref{p}-\eqref{boundary1}-\eqref{initial}  by using the
standard theory of parabolic equations,  we still have two main difficulties: 
\begin{itemize}
\item[(1).] the spatial domain $[0, \xi(t)]$ changes over time; 
\item[(2).] there is some singularity in the last term
in the equation \eqref{p}. 
\end{itemize} 
To solve the first difficulty we can straighten the boundary in
several ways, such as, to use the isothermal coordinate as in \cite{LMN, MNL} or to use the
spherical coordinate, etc. To solve the second difficulty we adopt the equation \eqref{p01}
rather than \eqref{p}, though the calculation will become more complicated.
Hence in this subsection we will straighten the boundary of the spatial domain
and convert the equation \eqref{p01} into one in a fixed domain and without singularities.

We transfer the original domain
$\Omega:=\{(x,y)\in \R^{N+1}\mid y> |x|>0\}$ into a new one by using a new variable
$ \zeta = \frac{x}{y}$.
In the new coordinate system $(\zeta,y)$, $\Omega$ is expressed as a half cylinder
$D:= \{(\zeta,y)\mid 0 < \zeta <1,\  y >0\}$.
If $u(r,t)$ is a solution of \eqref{p} satisfying the estimates in the previous subsection,
then $\tilde{u}(x,t) := u(|x|, t)$ is a solution of \eqref{p01} satisfying $|D \tilde{u}| \leq G<1$.
For simplicity, we rewrite $\tilde{u}$ as $u$ again, and introduce a new unknown $ v(\zeta,t)$ as follows:
\begin{equation}\label{vu}
v(\zeta,t) = u (X(\zeta,t), t),
\end{equation}
with
\begin{equation}\label{def:x-y}
\zeta = Y (x,t) := {\frac {x} {  u(x,t)}} ,
\end{equation}
and $ x =X(\zeta,t)$ being the inverse function.  Then $|\zeta|\leq 1$ due to $(x, u(x,t))\in \Omega$.
By the implicit function theorem, the inverse function $x=X(\zeta,t)$ exists if
$$
{\frac {\partial (x- \zeta u(x,t))}{\partial x}}   =\det \big( I_{N\times N} - \zeta^{T}\cdot D u \big) \not= 0,
$$
where $I_{N\times N}$ is the $N$-th order unit matrix,  $\zeta^T$ is a column vector and $D u$ is a row one.
This is actually true by $|D u| \leq G<1$  and the following lemma.

\begin{lem}
Assume $a=(a_1,\cdots, a_N), \ b=(b_1, \cdots, b_N)\in \R^N$ satisfy $|a|, |b|<1$. Then $\det (I_{N\times N}
- a^T \cdot b) \not= 0$, where $a^T$ denotes the transposition of $a$.
\end{lem}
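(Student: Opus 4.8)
The plan is to exploit the fact that $a^T\cdot b$ is a \emph{rank-one} matrix --- precisely the outer product $(a_ib_j)_{1\le i,j\le N}$ --- so that the determinant collapses to a one-dimensional quantity. Concretely, I would first recall the matrix determinant lemma (a Sherman--Morrison type identity): for any column vectors $u,v\in\R^N$ one has $\det(I_{N\times N}+uv^T)=1+v^Tu$. Applying this with $u=-a^T$ and $v=b^T$ (viewing $a,b$ as row vectors, consistently with the convention fixed in \eqref{def:x-y}, so that $a^T,b^T$ are columns and $a^T\cdot b$ is the $N\times N$ outer product) gives at once
\begin{equation}
\det\big(I_{N\times N}-a^T\cdot b\big)=1-b\cdot a^T=1-\sum_{i=1}^N a_ib_i=1-\langle a,b\rangle .
\end{equation}

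Next I would invoke the Cauchy--Schwarz inequality: $|\langle a,b\rangle|\le |a|\,|b|<1$ by the hypothesis $|a|,|b|<1$. Hence $1-\langle a,b\rangle\ge 1-|a|\,|b|>0$, so in particular $\det(I_{N\times N}-a^T\cdot b)\neq 0$, which is the assertion. Note that this argument actually yields the sharper statement that the determinant is strictly positive, bounded below by $1-|a|\,|b|>0$; this positivity (not just non-vanishing) is what is really wanted when the lemma is used to invert $I_{N\times N}-\zeta^T\cdot Du$ via the implicit function theorem in the preceding paragraph.

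If one prefers not to quote the determinant lemma, the same conclusion follows from a direct eigenvalue count: $a^T\cdot b$ has rank at most $1$, its unique possibly-nonzero eigenvalue equals its trace $\sum_i a_ib_i=\langle a,b\rangle$, so the eigenvalues of $I_{N\times N}-a^T\cdot b$ are $1$ with multiplicity $N-1$ together with $1-\langle a,b\rangle$; taking the product of the eigenvalues recovers the determinant, and one finishes with Cauchy--Schwarz exactly as above. There is essentially no obstacle here beyond keeping the row/column transpose conventions straight; the lemma is elementary linear algebra, recorded only to make the change of variables $\zeta=x/u(x,t)$ rigorous.
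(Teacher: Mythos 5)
Your argument is correct, and it is a genuinely different route from the paper's. You compute the determinant explicitly via the rank-one update formula $\det(I+uv^T)=1+v^Tu$ (or equivalently by observing that $a^T\cdot b$ has eigenvalues $\langle a,b\rangle$ and $0$ with multiplicity $N-1$), obtaining $\det(I_{N\times N}-a^T\cdot b)=1-\langle a,b\rangle$, and then finish with Cauchy--Schwarz. The paper instead argues by contradiction that $1$ cannot be an eigenvalue of $a^T\cdot b$: if $c^T=(a^T\cdot b)\cdot c^T$ for some $c\neq 0$, then $\sigma:=b\cdot c^T\neq 0$, and left-multiplying by $b$ gives $\sigma=\sigma(b\cdot a^T)$, contradicting $|b\cdot a^T|\le|a||b|<1$. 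Both proofs are elementary and hinge on the same estimate $|\langle a,b\rangle|<1$, but yours buys a little more: it gives the explicit value of the determinant and hence the quantitative lower bound $1-|a||b|>0$, whereas the paper's contradiction argument only establishes non-vanishing. This positivity is a mild bonus in the application (it fixes the sign of the Jacobian, not just its nonsingularity), though non-vanishing is all that is strictly needed to invoke the implicit function theorem.
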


\begin{proof}
It is sufficient to show that $1$ is not an eigenvalue of the matrix $(a^T\cdot b)$. Assume by contradiction that
\begin{equation}\label{abc}
c^T = (a^T\cdot b) \cdot c^T = a^T\cdot (b\cdot c^T)
\end{equation}
for some $c \in \R^N \backslash \{0\}$. This equality implies that $\sigma:= b\cdot c^T \not = 0$.
Moreover, multiplying the equality \eqref{abc} by $b$ from left we obtain
$$
\sigma = b\cdot c^T = \sigma (b\cdot a^T).
$$
This contradicts the facts that $\sigma\not= 0$ and $|b\cdot a^T| \leq |a|\cdot |b|<1$.
\end{proof}

For simplicity, in the rest of this subsection, we write
$$
u_i := D_i u = \frac{\partial}{\partial x_i} u(x,t),\quad v_j := \frac{\partial}{\partial \zeta_j} v(\zeta,t), \quad i,j=1,2,\cdots,N,
$$
and write $\sum\limits_{i=1}^N v_i x_i = x\cdot Dv$ as $v_i x_i$.
Differentiating $u(x,t) = v(\zeta,t) $ in $x_i$ we have $u_i = v_k \frac{\partial \zeta_k}{\partial x_i}$.
Using \eqref{def:x-y} we have
$$
\frac{\partial \zeta_k}{\partial x_i} = \frac{\delta_{ki} u - x_k u_i}{u^2}, \quad i, k =1,2,\cdots, N,
$$
and so
\begin{equation}\label{u-i-1st}
(u^2 + x_k v_k ) u_i = u v_i,
\end{equation}
or, equivalently,
\begin{equation}\label{u-i-1st000}
(v + \zeta Dv ) D u = Dv ,\quad Du =\frac{Dv}{v +\zeta Dv}.
\end{equation}
Multiplying the first equality by $\zeta$ we have
\begin{equation}\label{zeta-Dv}
\zeta Dv  = \frac{ \zeta Du }{1- \zeta Du}  v
\end{equation}
with $|\zeta Du|\leq G$ due to $|\zeta|\leq 1$ and $ |Du|\leq G$.  So
$$
v+ \zeta Dv = \frac{v}{1-\zeta Du}\in \Big[ \frac{v}{1+G} ,\ \frac{v}{1- G} \Big].
$$
Moreover, by the second equality of \eqref{u-i-1st000} we have
\begin{equation}\label{est-of-Dv}
|Dv| = |Du| (v+ \zeta Dv) \leq \frac{v G }{1-G}.
\end{equation}
Differentiating \eqref{u-i-1st}  in  $x_j$ we have
$$
\Big( 2uu_j +\delta_{kj} v_k + x_k v_{kl} \frac{\partial \zeta_l}{\partial x_j} \Big) u_i
+ (u^2 + x_k v_k) u_{ij} = u_j v_i + u v_{il} \frac{\partial \zeta_l}{\partial x_j}.
$$
Denote $\Delta := v^2 + v (\zeta Dv) \in \Big[ \frac{v^2}{1+G} ,\ \frac{v^2}{1- G} \Big]$. By a direct but tedious calculation we obtain
\begin{equation}\label{u-2nd-express}
u_{ij} \Delta^3  =  v_{ij} \Delta^2 -  v v_i ( \zeta D v_j) \Delta - v v_j ( \zeta D v_i) \Delta - 2 v^3 v_i v_j + v^2 v_i v_j \zeta_k \zeta_l v_{kl}
\end{equation}
Finally, differentiating $u(x,t)=v(\zeta,t)$ with respect to $t$, in a similar way as above we have
$$
u_t = \frac{v^2 v_t} {\Delta}.
$$
Consequently, the equation \eqref{p01} is converted into
\begin{equation}\label{v-eq}
        v_t = a_{ij}(\zeta,v, Dv) v_{ij} + f(\zeta, v, Dv),\quad  \zeta\in B_1,\ t>0,
\end{equation}
where $B_1 := \{\zeta \in \R^N \mid |\zeta|<1\}$,
\begin{eqnarray*}
a_{ij}(\zeta, v, Dv ) & := &  \frac{1}{v^2} \Big( \delta_{ij} -  \frac{v^2 v_i v_j}{\Delta^2 + v^2 |Dv|^2} \Big) + \frac{1}{\Delta^2 } \Big( \delta_{mn} -  \frac{v^2 v_m v_n}{\Delta^2 + v^2 |Dv|^2} \Big) v_m v_n \zeta_i \zeta_j\\
& & - \frac{2}{v \Delta} \Big( \delta_{mj} -  \frac{v^2 v_m v_j}{\Delta^2 + v^2 |Dv|^2} \Big) v_m \zeta_i ,
\end{eqnarray*}
$$
f(\zeta,v, Dv ) = -\frac{2v}{\Delta^2} \Big( \delta_{ij} - \frac{v^2 v_i v_j}{\Delta^2  + v^2 |Dv|^2 } \Big) v_i v_j.
$$

Now we show that the equation \eqref{v-eq} is a uniform parabolic one in any finite time interval.
In fact, for any $\gamma=(\gamma_1,\cdots, \gamma_N)\in \R^N$ satisfying $|\gamma|=1$, denoting $q_1:= \gamma Dv,\ q_2 := \gamma \zeta $, we have
\begin{eqnarray*}
a_{ij}\gamma_i \gamma_j & = & \frac{1}{v^2} \Big( 1 -  \frac{v^2 q^2_1 }{\Delta^2 + v^2 |Dv|^2} \Big) +
\frac{1}{\Delta^2 } \Big( |Dv|^2 q^2_2 -  \frac{v^2 |Dv|^4 q_2^2 }{\Delta^2 + v^2 |Dv|^2} \Big)  - \frac{2}{v \Delta} \Big( q_1 q_2  -
\frac{v^2 |Dv|^2 q_1 q_2}{\Delta^2 + v^2 |Dv|^2} \Big) \\
 & \geq & \Big( \frac{1}{v} -  \frac{q_1 q_2 }{\Delta } \Big)^2 - \frac{1}{\Delta^2 + v^2 |Dv|^2}
 \Big( q_1 - \frac{v|Dv|^2 q_2}{\Delta} \Big)^2 = Q_+ \cdot Q_-,
\end{eqnarray*}
for
$$
Q_\pm := \Big( \frac{1}{v} -  \frac{q_1 q_2 }{\Delta } \Big) \pm  \frac{1}{\sqrt{\Delta^2 + v^2 |Dv|^2}}
\Big( q_1 - \frac{v|Dv|^2 q_2}{\Delta} \Big).
$$
Now we show that $Q_+$ and $Q_-$ are positive.
Since we are considering radially symmetric solutions $u(x,t)$, the converted unknown $v(\zeta,t)$
is also a radially symmetric one, so $Dv(\zeta,t)=0$ when $\zeta=0$, and $Dv$ is parallel to $\zeta$
when $\zeta\not= 0$. The former implies that $a_{ij}\gamma_i \gamma_j = 1/v^2 >0$ at $\zeta=0$.
The latter implies that $Dv = (\zeta Dv)  \frac{\zeta}{|\zeta|^2}$ when $\zeta\not=0$. Moreover,
under the additional condition $u'_0(r)\geq 0$ in Theorem \ref{thm:exist} we have by Lemma \ref{lem:gradient}
(ii) that $u_r>0$ and so $Du$ has the same direction as $x$. Hence $q:= \zeta Dv \geq 0$ and so
\begin{equation}\label{q12}
q_1 =\gamma Dv =  \frac{q q_2}{|\zeta|^2},\quad  |Dv|^2 q_2 = \frac{q^2}{|\zeta|^2 } q_2 = q q_1,\quad q_1 q_2 =
q\frac{q_2^2}{|\zeta|^2} \leq q.
\end{equation}
Therefore,
\begin{eqnarray*}
vQ_\pm & \geq & \Big( 1- \frac{vq}{\Delta } \Big) \pm \frac{v q_1}{\sqrt{\Delta^2 + v^2 |Dv|^2}}  \Big( 1- \frac{vq}{\Delta }\Big)\\
& = & \Big( 1 - \frac{q}{v+q} \Big) \Big( 1 \pm \frac{q_1}{\sqrt{ (v+q)^2 + |Dv|^2}}  \Big) >0.
\end{eqnarray*}
This proves $a_{ij}\gamma_i \gamma_j >0$, and so the equation is a uniform parabolic one.

On the other hand, the boundary condition \eqref{boundary1} implies that the contact angle $\phi$ between
the graph of $y=u(r,t)$ and the line $y=r$ satisfies
$$
\tan \Big( \frac{\pi}{4} - \phi\Big) = u_r (\xi(t),t) = k (u(\xi (t), t)).
$$
Hence the boundary condition \eqref{p02} corresponding to the equation \eqref{p01} is converted into
$$
\frac{\zeta Dv + (v+\zeta Dv) } {\sqrt{2 [ (v+\zeta Dv)^2 + |Dv|^2 ] }} = \cos \phi = \frac{1+k(v)}{\sqrt{2 (1+k^2(v))}},\quad
\zeta \in \partial B_1, \ t>0.
$$
Since $Dv=q \zeta$ with $q>0$ on $\partial B_1$, we have $q= \zeta Dv = |Dv|$, and so the boundary condition is simplified as
\begin{equation}\label{v-eq-bdry}
2 \zeta Dv + v = \frac{1+k(v)}{1-k(v)}, \quad \zeta \in \partial B_1, \ t>0.
\end{equation}

The problem \eqref{v-eq}-\eqref{v-eq-bdry} is a quasilinear parabolic equation with oblique boundary condition in the
fixed spatial domain $B_1$.
With the a priori estimate for $v$ (the same as that for $u$) and that for $Dv$ (i.e. \eqref{est-of-Dv}) in hand,
by using the standard theory for parabolic equations (see for example, \cite{Lie})
we have the following existence result.

\begin{lem}\label{globalv}
 Assume $u_0(r)$ is an admissible function with $u'_0(r)\geq 0$ and $v_0(\zeta)$ is its converted function as in \eqref{vu}.
 Then the problem \eqref{v-eq} with
 initial data $v(\zeta, 0)= v_0(\zeta)$  has a unique, radially symmetric time-global solution $v(\zeta,t)$. Moreover,
 for any $\mu \in (0,1)$ and $T>\delta>0$, with  $Q^{\zeta}_{T,\delta} := B_1 \times [\delta,T]$, we have
  $v\in C^{2+\mu, 1 +{\frac {\mu}{2}}}
 \Big(\overline{Q^{\zeta}_{T,\delta}} \Big)$ and
  $\| v(\zeta,t)\|_{C^{2+\mu, 1+{\frac {\mu}{2}} }  \big(\overline{Q^{\zeta}_{T,\delta}} \big)}$ $ \leq C$ for some positive
  $C$ depending on $T,\delta$ and $\mu$.
\end{lem}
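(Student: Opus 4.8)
\emph{Plan.} The statement is a standard continuation/regularity argument for a uniformly parabolic quasilinear equation under an oblique boundary condition; the only nonroutine inputs are the a priori bounds already assembled above, so the proof is essentially bookkeeping on top of them together with a citation to \cite{Lie}. First I would establish local existence. The datum $v_0\in C^1(\overline{B_1})$ inherits from the admissibility of $u_0$ the zeroth-order compatibility condition built into \eqref{v-eq-bdry}, so the theory of quasilinear parabolic problems with oblique (here Robin-type, with obliqueness vector $2\zeta$) boundary data produces a classical solution $v$ on a maximal interval $[0,T_{\max})$ that is smooth for $t>0$; undoing the change of variables \eqref{vu}--\eqref{def:x-y} then gives a classical solution of \eqref{p}--\eqref{boundary1}--\eqref{initial}. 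Since the operator in \eqref{v-eq} and the boundary operator in \eqref{v-eq-bdry} are equivariant under rotations $\zeta\mapsto R\zeta$ and $v_0$ is radial, the uniqueness proved below forces $v(R\zeta,t)\equiv v(\zeta,t)$, i.e.\ $v$ is radially symmetric.

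Next I would collect, on each slab $\overline{B_1}\times[0,T]$ with $T<T_{\max}$, three estimates with constants depending only on $T$: (i) $0<c_1(T)\le v\le c_2(T)$, obtained either by transplanting the self-similar barriers through \eqref{vu} or, equivalently, by transferring the question to $u$ and invoking Lemma \ref{comparison} together with \eqref{u-bound} and the positivity and boundedness of $u_0$; (ii) the gradient bound $|Dv|\le \frac{G}{1-G}\,v\le \frac{G}{1-G}c_2(T)$, which is precisely \eqref{est-of-Dv} and rests on $|Du|\le G<1$ from Lemma \ref{lem:gradient}; and (iii) uniform parabolicity --- the computation preceding the lemma already gives $a_{ij}\gamma_i\gamma_j>0$, and with $v$ trapped between positive constants and $|Dv|$ bounded the factors $1-\frac{q}{v+q}\ge 1-G$ and $1-\bigl(\tfrac{q_1}{\sqrt{(v+q)^2+|Dv|^2}}\bigr)^2$ are bounded below by positive constants, whence $a_{ij}\gamma_i\gamma_j\ge\lambda(T)>0$, while $a_{ij}$ and $f$ stay bounded on the relevant range of $(\zeta,v,Dv)$ because $\Delta\in[\frac{v^2}{1+G},\frac{v^2}{1-G}]$ is bounded away from $0$.

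With (i)--(iii) the usual regularity chain runs on $\overline{Q^{\zeta}_{T,\delta}}=\overline{B_1}\times[\delta,T]$ for any $0<\delta<T<T_{\max}$: the De Giorgi--Nash--Moser (Krylov--Safonov) estimate, valid up to the oblique boundary, gives a Hölder bound on $v$; the gradient Hölder estimate for uniformly parabolic quasilinear equations with oblique boundary conditions from \cite{Lie} upgrades this to a bound on $[v]_{C^{1+\mu,(1+\mu)/2}}$; then $a_{ij}(\zeta,v,Dv)$ and $f(\zeta,v,Dv)$ are Hölder continuous, so linear Schauder theory for oblique problems yields the asserted $\|v\|_{C^{2+\mu,1+\mu/2}(\overline{Q^{\zeta}_{T,\delta}})}\le C(T,\delta,\mu)$. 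Since the $C^{1+\mu}$-part of this bound is independent of how close $T$ approaches $T_{\max}$, if $T_{\max}<\infty$ then $v(\cdot,T_{\max}^-)$ is again an admissible datum and the local theorem extends $v$ beyond $T_{\max}$, a contradiction; hence $T_{\max}=\infty$.

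Finally, for uniqueness, two radial solutions with the same datum yield two classical solutions $u_1,u_2$ of \eqref{p}--\eqref{boundary1}--\eqref{initial}, and Lemma \ref{comparison} applied both ways gives $u_1\equiv u_2$, hence $v_1\equiv v_2$. The one genuinely delicate step is the \emph{boundary} gradient Hölder estimate invoked above, since \eqref{v-eq-bdry} is nonlinear in $v$ through $k(v)$; I would deal with it by checking that \eqref{v-eq-bdry} satisfies the structural hypotheses required in \cite{Lie} and quoting the corresponding theorem rather than reproving it.
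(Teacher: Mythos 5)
Your proposal is correct and follows essentially the same route as the paper, which treats this lemma as a straightforward consequence of the a priori bounds already assembled ($L^\infty$ control from the self-similar barriers via Lemma \ref{comparison}, the gradient bound \eqref{est-of-Dv} descending from $|Du|\le G<1$, and the uniform parabolicity of $a_{ij}$) together with a citation of standard quasilinear oblique-boundary parabolic theory in \cite{Lie}; you have merely unpacked what ``standard theory'' does, including the local-existence/regularity-chain/continuation scaffolding, the radial-symmetry argument by rotational equivariance plus uniqueness, and the correct flagging of the one nontrivial point, namely verifying the structure conditions for the boundary gradient H\"older estimate when the boundary operator \eqref{v-eq-bdry} is nonlinear in $v$ through $k(v)$.
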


\begin{remark}\label{rem:u''>0}\rm
We remark that the additional condition $u'_0(r)\geq 0$ in Theorem \ref{thm:exist} and Lemma \ref{lem:gradient}
leads to $u_r \geq 0$ and $q\geq 0$, which are only used to derive the uniform parabolicity of \eqref{v-eq}.
%We guess that this parabolicity remains true even without the additional monotonicity for $u_0$.
\end{remark}

\subsection{Global existence for the solution of \eqref{p}-\eqref{boundary1}-\eqref{initial}}

Let $v(\zeta,t)$ be the solution obtained in the previous lemma. Then, the formula \eqref{def:x-y}
$$
\zeta = \frac{x}{u(x,t)} = \frac{x}{v(\zeta,t)}
$$
defines an implicit function $\zeta = Y(x,t)$. Denote $\tilde{u}(x,t) := v(Y(x,t),t)$, then it is a solution of
\eqref{p01}-\eqref{p02}. Consequently, $u(r,t)= u (|x|,t) := \tilde{u}(x,t)$ is the solution of
\eqref{p}-\eqref{boundary1}-\eqref{initial}:

\begin{lem}\label{globalu}
    The problem \eqref{p}-\eqref{boundary1}-\eqref{initial} with $u'_0(r)\geq 0$ has a unique, time-global solution
    $u(r,t)$.  Moreover, for any $\mu\in (0,1)$ and $T>\delta>0$, with $Q_{T,\delta} := \{(r,t) \mid 0<r<\xi(t),\ \delta\leq t \leq T\}$,
    we have $u \in C^{2+\mu, 1 +{\frac {\mu}{2}}} (\overline{Q_{T,\delta}})$ and $\| u(x,t)\|_{C^{2+\mu, 1+{\frac {\mu}{2}} }
     (\overline{Q_{T,\delta}})} \leq C$ for some positive $C$ depending on $T,\delta$ and $\mu$.
\end{lem}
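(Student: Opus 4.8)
The plan is to invert the change of variables of Subsection 2.3: starting from the solution $v(\zeta,t)$ provided by Lemma \ref{globalv}, I recover $u$ and transfer the regularity, while uniqueness comes from either Lemma \ref{globalv} or the comparison principle.

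First I would show that, for each fixed $t$, the map $\Psi_t:\zeta\mapsto \zeta\, v(\zeta,t)$ is a diffeomorphism of $\overline{B_1}$ onto a ball $\overline{B_{\xi(t)}(0)}$. Its Jacobian matrix is $v\,I_{N\times N}+\zeta^T\!\cdot Dv$, a rank-one perturbation of $v\,I_{N\times N}$, so by the matrix determinant lemma $\det D\Psi_t = v^{N-1}\big(v+\zeta\cdot Dv\big)$. Since $v>0$ and the solution of Lemma \ref{globalv} is radially symmetric with $\zeta\cdot Dv\ge 0$ --- the same sign property that makes \eqref{v-eq} uniformly parabolic, provable directly for the radial derivative of $v$ by a maximum-principle argument as in Lemma \ref{lem:gradient}(ii) --- this determinant is strictly positive, and on $\overline{B_1}\times[\delta,T]$ it is bounded away from $0$ by the a priori bounds. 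The implicit function theorem then yields a map $\zeta=Y(x,t)$ as smooth as $v$; since $v$ is radial, $Y(x,t)$ points in the direction of $x$ and depends only on $(|x|,t)$, so $\tilde u(x,t):=v(Y(x,t),t)$ and $u(r,t):=\tilde u(x,t)$ with $r=|x|$ are well defined and radial. The free boundary is $\xi(t):=v|_{\partial B_1}$ (a constant, by radial symmetry); evaluating $x=\zeta v$ at $|\zeta|=1$ gives $|x|=v=\xi(t)$, so $u(\xi(t),t)=\xi(t)$ and $\Gamma_t$ contacts $\partial\Omega$ on the prescribed circle.

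Next I would verify that $u$ solves \eqref{p}-\eqref{boundary1}-\eqref{initial}. This is exactly the computation of Subsection 2.3 read backwards: the relations \eqref{u-i-1st000} and \eqref{u-2nd-express} for $Du, D^2u$ in terms of $Dv, D^2v$, together with $u_t=v^2 v_t/\Delta$, are invertible thanks to the non-degeneracy just established, so \eqref{v-eq} is equivalent to \eqref{p01}, which for a radial function is \eqref{p}; likewise \eqref{v-eq-bdry} is equivalent to \eqref{p02}, i.e. to \eqref{boundary1}. The initial condition is immediate because $v(\zeta,0)=v_0(\zeta)$ is, by definition, the conversion of the admissible datum $u_0$, whence $u(r,0)=u_0(r)$. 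For the stated regularity, the bound $\|v\|_{C^{2+\mu,1+\mu/2}(\overline{Q^{\zeta}_{T,\delta}})}\le C$ from Lemma \ref{globalv}, composed with the $C^\infty$-in-$x$ and (on $[\delta,T]$) uniformly non-degenerate map $Y$, gives $u\in C^{2+\mu,1+\mu/2}(\overline{Q_{T,\delta}})$ with the asserted constant; up to $t=0$ the continuity of $v$ and $Dv$ gives continuity of $u$ and $u_r$, which is what the definition of a classical solution requires there.

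The main obstacle is making the non-degeneracy of the inverse transformation genuinely global in time: one must know that $v+\zeta\cdot Dv$ stays uniformly positive, i.e. that the sign property $\zeta\cdot Dv\ge0$ and the gradient bound \eqref{est-of-Dv} hold for all $t$, and that the free boundary value $\xi(t)=v|_{\partial B_1}$ remains within its a priori range so that the Schauder constants on $Q_{T,\delta}$ do not degenerate on finite intervals. Once this is in hand, uniqueness follows immediately: a second classical solution of \eqref{p}-\eqref{boundary1}-\eqref{initial} with $u'_0\ge0$ has $u_r>0$ by Lemma \ref{lem:gradient}(ii), hence converts to a solution of \eqref{v-eq}-\eqref{v-eq-bdry}, which is unique by Lemma \ref{globalv}; alternatively one applies the comparison principle of Lemma \ref{comparison} directly. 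Everything else is chain-rule bookkeeping through $Y$.
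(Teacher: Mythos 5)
Your proposal is correct and follows the same route the paper takes (and leaves mostly implicit): recover $u$ from the solution $v$ of Lemma \ref{globalv} by inverting $x=\zeta\, v(\zeta,t)$, then read the computations of Subsection 2.3 backwards to verify the equation, boundary condition, and regularity. You actually supply a detail the paper elides — the non-degeneracy of the map $\zeta\mapsto \zeta\,v(\zeta,t)$ via the determinant $v^{N-1}(v+\zeta\cdot Dv)>0$ (the paper only checks the Jacobian in the other direction, from $u$ to $v$) — and you correctly note that the sign $\zeta\cdot Dv\ge 0$, inherited from $u_r\ge 0$, is what makes this work.
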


%%%%%%%%%%%%%%%%%%%%%%%%%%
%%%%%%%%%%%%%%
%%%%%%%%%%%%%%%%%%%%%%%
%%%%%%%%%%%%%%%%%%%%%%%%%%
%%%%%%%%%%%%%%%%%%%%%%%%%%
%%%%%%%%%%%%%%%%%%%%%%%%%%%%
%%%%%%%%%%%%%%%%%%%%%%%%
%%%%%%%%%%%%%%%%%%%%%%%%%%%%
%%%%%%%%%%%%%%%%%%%%%%%%
\section{Estimate by the Slowest Self-similar Solution}

In the previous section we use two self-similar  solutions to give the $L^\infty$ estimate \eqref{u-bound} for $u$, which means
that
\begin{eqnarray*}
0 & \leq & u(r,t)-\underline{u}(r,t+t_0) \leq \bar{u}(r,t+t^0)-\underline{u}(r,t+t_0)\\
& = & \sqrt{2P^0 (t+t^0)} \Phi\Big( \frac{r}{\sqrt{2P^0 (t+t^0)}} ; k^0\Big) - \sqrt{2P (t+t_0)}
\Phi\Big( \frac{r}{\sqrt{2P (t+t_0)}} ; k_0\Big).
\end{eqnarray*}
In particular, at $r=0$ we have, with $\Phi^0:= \Phi(0;k^0),\ \Phi:= \Phi(0;k_0)$,
$$
0  \leq  u(0,t)-\underline{u}(0,t+t_0) \leq \sqrt{2P^0 (t+t^0)} \Phi^0 - \sqrt{2P (t+t_0)}\Phi =O(1)\sqrt{t},
$$
as $t\to \infty$. This estimate is too rough since the propagation speed of $\underline{u}$ and $\bar{u}$ are completely different.
In this section, we will give a more precise estimate for the solution (as shown in Theorem \ref{thm:est})
by considering the homogenization limit
of the solution, that is, the case when the period $\varepsilon$ of the boundary function $k$ tends to $0$.
To do this, we will construct another {\it better} upper solution by using $\underline{u}$ rather than $\bar{u}$.

Recall that we assume \eqref{initial-est} in Theorem \ref{thm:est}, that is,
\begin{equation}\label{initial-compare}
\underline{u} (r, t_0) \preceq u_0(r) \preceq \underline{u} (r, s_0).
\end{equation}
Since $\underline{u} (r,t+ t_0)$ is a lower solution of the problem we have
\begin{equation}\label{underline-u<u}
\underline{u} (r,t+ t_0) \equiv  \sqrt{2P(t+ t_0)}\ \Phi\Big( \frac{r}{\sqrt{2P(t+ t_0)}};k_0 \Big) \preceq u(r,t),\quad t>0.
\end{equation}

 Now we construct a fine upper solution by using the same $\Phi(z;k_0)$ rather than $\Phi(z; k^0)$.
For simplicity, we write $\Phi (z; k_0)$ as $\Phi(z)$ in what follows.
Since $\Phi(1)=1$ and $\Phi'(1) =k_0 <1$, we can assume that
$\Phi(z) $ is defined in $[0,1+a]$ for some small $a>0$ with  $(1+a)^2 < \frac32$ and that
$$
\Phi'(z)\leq \bar{g}:= \frac{1+k_0}{2} <1,\quad  z\in [1,1+a].
$$
Then
$$
\Phi(z)\leq 1+b := \Phi(1+a) < 1+a,\quad  z\in [1,1+a].
$$
Denote
$$
R(t):= \sqrt{2Pt},\quad R_1(t):= \sqrt{2P(t+s_0)},\quad \widehat{R}_1(t) := (1+a)R_1(t),
$$
$$
U(r,t):= R(t) \Phi \Big(\frac{r}{R(t)} \Big),\quad  U_1 (r,t):= R_1 (t) \Phi \Big(\frac{r}{R_1 (t)} \Big).
$$
Then $U_1(r,t)$ is well-defined not only for $0\leq r \leq R_1(t)$ but also for $0\leq r \leq \widehat{R}_1 (t)$,
and
$$
U_1 (\widehat{R}_1   (t) ,t) = R_1(t) \Phi(1+a ) = (1+b) R_1(t),\quad t>0.
$$

\subsection{Estimate in the time interval $[0, O(\varepsilon^{-1/3}) ]$}
First, we prepare some notation. Set
$$
a_1 = b_1 := {\frac {1}{6}}, \quad \tau_1 := P  \varepsilon^{-2b_1},\quad l_1 :=  2 P \varepsilon^{-b_1} ,
$$
$$
\psi_1 (r,t) := L_1 \varepsilon^{1/2} \Big(N t+ {\frac {r^2}{2}} \Big),  \quad t\in [0,\tau_1],\ r\in [0, l_1],
$$
and
\begin{equation}\label{baru1}
u^+_1 (r,t) := U_1 (r, t)+  \psi_1(r,t),\quad t\in [0,\tau_1],\ r\in [0, l_1],
\end{equation}
where $L_1>0$ satisfies
\begin{equation}\label{def-L1}
 L_1 R_1^2 (0) > R_1(0) + 4 M_0 + 6 (N-1) k_0 (1+4k_0^2) \quad \mbox{with}\quad M_0:=  \max_{z\in [0,1+a]} \Phi''(z) .
\end{equation}
Then,
$$
0\leq \psi_1 (r,t)\leq M L_1 \varepsilon^{a_1},\quad r\in [0, l_1],\ t\in [0,\tau_1],
$$
where $M:= NP+2P^2$. When
%%%%%%%%%%%%%%%%
%%%%%%%%%%%%%%%%
$\varepsilon$ is sufficiently small (say, $\varepsilon \leq \epsilon_1^* := [P/(3s_0)]^3$),
%%%%%%%%%%%%%%%
%%%%%%%%%%%%%%%
$$
\widehat{R}_1  (t) = (1+a)\sqrt{2P(t+s_0)} \leq (1+a)\sqrt{2P(\tau_1 +s_0)} < 2 \sqrt{P\tau_1} =l_1,\quad t\in [0,\tau_1],
$$
and so
$$
U_1 (r, t ) \leq u^+_1(r,t) \leq U_1 (r,t) + M  L_1 \varepsilon^{a_1},\quad r\in [0, \widehat{R}_1  (t)],\ t\in [0,\tau_1],
$$
In particular, at $r= \widehat{R}_1 (t)$ we have
$$
u^+_1 ( \widehat{R}_1 (t),t) = R_1 (t) \Phi(1+a) +\psi_1 ( \widehat{R}_1  (t),t) \leq (1+b) R_1(t)  + M  L_1 \varepsilon^{a_1},
$$
and so
$$
\widehat{R}_1  (t) - u^+_1 ( \widehat{R}_1 (t),t) \geq  (a-b)R_1 (t) - M  L_1 \varepsilon^{a_1} \geq (a-b)R_1 (0) - M  L_1 \varepsilon^{a_1} >0,\quad t\in [0,\tau_1],
$$
provided $\varepsilon$ is sufficiently small (say, $\varepsilon \leq \epsilon_2^*:= [ (a-b)\sqrt{2Ps_0}/ (ML_1)]^6$). Therefore, the graph of $y = u^+_1(r,t)$
for $r\in [0, \widehat{R}_1  (t)]$ intersects the line $y=r$ at a point $(\eta_1(t), \eta_1 (t))$, and so
$$
\eta_1 (t) < \widehat{R}_1 (t),\quad t\in [0,\tau_1].
$$
(There is only one of such point since $ u^+_{1r} (r,t)<1$ in $r\in [0,\widehat{R}_1(t)]$ when $\ve$ is small).

Now we give the estimate for $u$ in the time interval $[0,\tau_1]$.

\begin{lem}\label{lem:tau-1}
The following estimates hold
\begin{equation}\label{est-11}
u(\cdot ,t) \preceq U_1 (\cdot ,t) + M  L_1 \varepsilon^{1/6}, \quad t\in [0,\tau_1],
\end{equation}
\begin{equation}\label{est-12}
u(\cdot ,\tau_1) \preceq U_1 (\cdot ,\tau_1+s_1),
\end{equation}
for some $s_1 = O(1)$.
\end{lem}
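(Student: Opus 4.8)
The plan is to show $u(\cdot,t)\preceq u^+_1(\cdot,t)=U_1(\cdot,t)+\psi_1(\cdot,t)$ on $[0,\tau_1]$ and then to read off \eqref{est-11}--\eqref{est-12} from the size bound $0\le\psi_1\le ML_1\ve^{1/6}$ and a short time translation. The perturbation $\psi_1=L_1\ve^{1/2}(Nt+r^2/2)$ is chosen precisely so that it solves the \emph{linear} heat equation $\psi_{1t}=\psi_{1rr}+\tfrac{N-1}{r}\psi_{1r}$, since $\psi_{1t}=NL_1\ve^{1/2}$ while $\psi_{1rr}+\tfrac{N-1}{r}\psi_{1r}=L_1\ve^{1/2}+(N-1)L_1\ve^{1/2}$. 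As $U_1$ solves \eqref{p} exactly with angle $k_0$ (it is the self-similar solution $\underline u$ shifted in time by $s_0$), substituting $u^+_1=U_1+\psi_1$ into the operator gives, on $0<r<\eta_1(t)$,
\begin{equation*}
u^+_{1t}-\frac{u^+_{1rr}}{1+(u^+_{1r})^2}-\frac{(N-1)u^+_{1r}}{r}
= L_1\ve^{1/2}\Big(1-\frac{1}{1+(u^+_{1r})^2}\Big)+U_{1rr}\Big(\frac{1}{1+U_{1r}^2}-\frac{1}{1+(u^+_{1r})^2}\Big)\ge 0,
\end{equation*}
because $U_{1rr}=R_1^{-1}\Phi''>0$, $U_{1r}=\Phi'(r/R_1)\ge0$ and $\psi_{1r}\ge0$, so $(u^+_{1r})^2\ge U_{1r}^2$; also $u^+_{1r}(0,t)=0$. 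Hence the interior inequality and the axis condition for $u^+_1$ hold on $\{0\le r\le\eta_1(t),\ 0\le t\le\tau_1\}$, with $\eta_1(t)\in(R_1(t),\widehat R_1(t))$ the crossing point already constructed.

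The genuinely delicate point — and the main obstacle — is the oblique condition at the free boundary $r=\eta_1(t)$. Since $\eta_1(t)/R_1(t)=1+O(\ve^{1/6})$ on $[0,\tau_1]$, one computes $u^+_{1r}(\eta_1(t),t)=\Phi'(\eta_1(t)/R_1(t);k_0)+L_1\ve^{1/2}\eta_1(t)=k_0+O(\ve^{1/6})$, which is in general strictly below the value $k(\eta_1(t))\in[k_0,k^0]$ prescribed there; so $u^+_1$ is \emph{not} literally an upper solution of \eqref{p}-\eqref{boundary1}, and $u$ may overshoot it in a thin layer near its free boundary. To close the argument one controls this layer: because $k$ is $\ve$-periodic, the excess Neumann datum of $u-u^+_1$ at the free boundary oscillates on spatial scale $\ve$, and the smoothing of the (uniformly parabolic) equation confines the excess to a layer of width $O(\ve\log\tfrac1\ve)$ in which $u-u^+_1=O(\ve)$, while $u\le u^+_1$ beyond it; concretely one enlarges $u^+_1$ by an $O(\ve)$ correction that is exponentially localized near $r=\eta_1(t)$, and checks — this is where \eqref{def-L1} and the inequalities $\widehat R_1(t)<l_1$ enter — that the enlarged barrier still has its free boundary inside $[0,\widehat R_1(t))$. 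Together with the lower bound $\underline u(\cdot,t+t_0)\preceq u(\cdot,t)$ and $0\le\psi_1+O(\ve)\le ML_1\ve^{1/6}$, this yields $u(\cdot,t)\preceq U_1(\cdot,t)+ML_1\ve^{1/6}$ on $[0,\tau_1]$, i.e.\ \eqref{est-11}. It is exactly this layer analysis — matching a barrier whose contact angle is $\approx k_0$ to an oscillating angle reaching up to $k^0$ — that accounts for essentially all the difficulty and for the error being only a fractional power of $\ve$.

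For \eqref{est-12} it remains to bury the constant $ML_1\ve^{1/6}$ in a translate of the self-similar solution. Since $\partial_tU_1(r,t)=\tfrac{P}{R_1(t)}(\Phi-z\Phi')\big|_{z=r/R_1(t)}\ge\tfrac{Pc_0}{R_1(t)}$ with $c_0:=\min_{[0,1+a]}(\Phi-z\Phi')>0$, and for $r\le\xi(\tau_1)$ the ratio $r/R_1(t)$ stays below $1+a$ for $t\ge\tau_1$, integrating over $[\tau_1,\tau_1+s_1]$ gives $U_1(r,\tau_1+s_1)-U_1(r,\tau_1)\ge Pc_0s_1/R_1(\tau_1+s_1)$; and since $R_1(\tau_1)=\sqrt{2P(P\ve^{-1/3}+s_0)}=O(\ve^{-1/6})$, the right side is at least $c\,s_1\ve^{1/6}$. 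Choosing $s_1=O(1)$ large enough — which also makes $R_1(\tau_1+s_1)-R_1(\tau_1)$ dominate $\eta_1(\tau_1)-R_1(\tau_1)$, so the free boundaries compare correctly — forces $U_1(\cdot,\tau_1)+ML_1\ve^{1/6}\preceq U_1(\cdot,\tau_1+s_1)$, whence $u(\cdot,\tau_1)\preceq U_1(\cdot,\tau_1+s_1)$ with $s_1=O(1)$.
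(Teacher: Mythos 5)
Your interior computation for $u^+_1$ on $(0,\eta_1(t)]$ matches the paper's, and your argument for \eqref{est-12} (integrate $U_{1t}\ge \delta/(2\sqrt{t+s_0})$ over a time interval of length $s_1$ and use $R_1(\tau_1)=O(\ve^{-1/6})$ to absorb the constant $ML_1\ve^{1/6}$) is essentially what the paper does. You also correctly identify that $u^+_1$ fails the oblique boundary inequality, since $u^+_{1r}(\eta_1(t),t)=k_0+O(\ve^{1/6})$ while $k$ oscillates up to $k^0$. That is indeed the crux.

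But the way you close the gap at the free boundary is where the proposal breaks down, and it is genuinely different from what the paper does. You assert that "the smoothing of the (uniformly parabolic) equation confines the excess to a layer of width $O(\ve\log\frac1\ve)$ in which $u-u^+_1=O(\ve)$" and then wave at an "$O(\ve)$ correction that is exponentially localized near $r=\eta_1(t)$." This does not hold up under scrutiny. The Neumann mismatch $k(\cdot)-u^+_{1r}(\eta_1(t),t)$ is not mean-zero: it oscillates between roughly $0$ and $k^0-k_0=O(1)$, so there is no cancellation to exploit, and there is no reason the excess should be confined to a thin layer rather than accumulating as the free boundary advances. Worse, if one tries to build the exponentially localized $O(\ve)$ correction $\chi$ so that $\chi_r\approx k^0-k_0=O(1)$ at $r=\eta_1(t)$ with $\chi(\eta_1(t),t)=O(\ve)$, then $\chi$ must vary on the spatial scale $O(\ve)$, forcing $\chi_{rr}=O(1/\ve)$; the parabolic supersolution inequality then fails catastrophically, since $\chi_t$ is nowhere near large enough to dominate $\chi_{rr}/(1+(u^+_{1r})^2)$. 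So the enlarged barrier you propose is not, in fact, an upper solution. This is a real gap, not a detail to be filled in.

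The paper takes a structurally different route that avoids ever having to make $u^+_1$ (or a modification of it) satisfy the oblique condition at the free boundary. It instead proves directly that $\xi(t)\le\eta_1(t)$ for $t\in[0,\tau_1]$, and then the comparison on the common domain $[0,\xi(t)]$ is a plain Dirichlet comparison, because $u^+_1(\xi(t),t)>\xi(t)=u(\xi(t),t)$ whenever $\xi(t)<\eta_1(t)$ and $u^+_{1r}<1$. To prove $\xi\le\eta_1$, assume the maximal interval on which it holds is $[0,s]$ with $s<\tau_1$, so $\xi(s)=\eta_1(s)$. Using the $\ve$-periodicity of $k$, choose $r^\star\in[\eta_1(s)-\ve,\eta_1(s))$ with $k(r^\star)=k_0$, and build the \emph{stationary} barrier $v$ solving \eqref{barrier-v}, which satisfies the boundary condition exactly at $(r^\star,r^\star)$ with the slowest angle $k_0$. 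The delicate estimate (driving the choice of $L_1$ in \eqref{def-L1}) is \eqref{v>u+epsilon}: $v(r_\star)>u^+_1(r_\star,s)$ at $r_\star=r^\star-\ve^{1/2}$. Since $u(\cdot,t)\preceq u^+_1(\cdot,t)\preceq u^+_1(\cdot,s)$ on $[0,s)$, the stationary $v$ caps $u$ and blocks it from ever reaching $(\eta_1(s),\eta_1(s))$ — contradiction. The $\ve^{1/6}$ in the final error comes from optimizing $\psi_1$ and the width $\ve^{1/2}$ of the blocking estimate, not from a boundary-layer asymptotic. If you want to repair your proposal, you should replace the unsubstantiated layer claim by this blocking construction; it is the key idea you are missing.
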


\begin{proof}
We prove the lemma by showing that $u^+_1$ is an upper solution in the time interval $[0,\tau_1]$.

First we show that
\begin{equation}\label{upperineq}
u^+_{1t} \geq  \frac{u^+_{1rr}}{1+ (u^+_{1r})^2} + \frac{N-1}{r} u^+_{1r},\quad r\in (0, \eta_1 (t)],\ t\in [0,\tau_1].
\end{equation}
In fact, for $r\in (0, \eta_1 (t)],\ t\in [0,\tau_1]$ we have $U_{1r} (r,t)>0,\  \psi_{1r} (r,t) > 0$
and $U_{1rr} (r,t)>0$, and so
\begin{eqnarray*}
& & u^+_{1t} - \frac{u^+_{1rr}}{1+(u^+_{1r})^2} - \frac{N-1}{r} u^+_{1r}\\
&= & \frac{U_{1rr}}{1+U^2_{1r}} + \frac{N-1}{r} U_{1r} + L_1 N \varepsilon^{1/2} - \frac{U_{1rr} +\psi_{1rr}}{1+(U_{1r} +\psi_{1r})^2} - \frac{N-1}{r} (U_{1r} + \psi_{1r}) \\
&= & \frac{U_{1rr}}{1+U^2_{1r}} - \frac{U_{1rr}}{1+(U_{1r} +\psi_{1r})^2}  + L_1 N \varepsilon^{1/2} - \frac{L_1 \varepsilon^{1/2}}{1+(U_{1r} +\psi_{1r})^2} - (N-1)L_1\varepsilon^{1/2} \\
& > &  \frac{U_{1rr}}{1+U^2_{1r}} -  \frac{U_{1rr} }{1+(U_{1r} +\psi_{1r})^2} =   \frac{U_{1rr} (2U_{1r} +\psi_{1r}) \psi_{1r}} {[1+U^2_{1r}]\cdot [1+(U_{1r} +\psi_{1r})^2]} \geq 0.
\end{eqnarray*}

Next we consider the boundary conditions. On the left boundary $r=0$, $u^+_1$
satisfies the homogeneous Neumann condition, the same as that for $u$. So the comparison
principle is applied on this boundary. On the right boundary $r= \eta_1(t)$, however,
the original boundary condition in \eqref{p} is an oblique one (or, a nonlinear Robin one).
Since it is nonlinear, the derivative of the solution on the boundary oscillates violently,
it is difficult to require that a constructed upper solution (like the above $u^+_1$) satisfies
such a boundary condition exactly. Therefore, we will compare the values of $u^+_1$ and $u$
instead of their derivatives on the right boundary, that is, we will show that
\begin{equation}\label{upperboundary}
\xi (t) \leq \eta_1 (t),\quad t\in [0,\tau_1],
\end{equation}
where $\xi(t)$ is the $r$-coordinate of the right end point of $u(r,t)$. For this purpose, we need the requirement
that the period $\ve$ of $g$ is sufficiently small.

By \eqref{initial-compare} we have
$$
\xi(0) \leq R_1 (0)= \sqrt{2Ps_0} < \eta_1(0).
$$
Therefore, \eqref{upperboundary} holds in the time interval $[0,s]$ when $s$ is small. Assume $[0,s]$ is the largest one of such intervals in $[0,\tau_1]$.
In what follows we  prove $s=\tau_1$ and so our lemma follows from the comparison principle.

Argue by contradiction, we  assume $0<s<\tau_1$. We will construct a short barrier just below the point $Q_1 :=
(\eta_1(s), \eta_1(s))$, which is a stationary solution and will block the real solution $u$ from
propagating over the barrier to reach the point $Q_1$, and so derive a contradiction.
More precisely, we construct the barrier from the point $Q^\star = (r^\star, r^\star)$, where
$r^\star \in [\eta_1(s) -\varepsilon, \eta_1(s))$ and $g(r^\star)=k_0$. (Such $r^\star$ exists since $g$ is $\varepsilon$-periodic).
By $u^+_1 (R_1 (s),s) > U_1 (R_1 (s),s)=R_1(s)$ we have  $R_1 (s)<\eta_1(s)$, and so
\begin{equation}\label{r-star>R(0)}
r^\star \geq  \eta_1(s) -\varepsilon > R_1 (s) - \varepsilon > R_1 (0)-\ve > \frac{R_1(0)}{2},
\end{equation}
when $\varepsilon$ is sufficiently small (say, $\ve <\epsilon_3^* := \sqrt{Ps_0/2}$).  The barrier is
the solution of the following initial value problem:
\begin{equation}\label{barrier-v}
\frac{v_{rr}}{1+v^2_r} + \frac{N-1}{r}v_r=0\ (r<r^\star),\quad v(r^\star )=r^\star,\quad v_r(r^\star )=k_0= g(v(r^\star)).
\end{equation}

First we prove that $v(r_\star)> u^+_1(r_\star, s)$ for  $r_\star := r^\star-\ve^{1/2}$.
It is easily seen that $v_{rr}<0$ in its existence interval. Assume $v_r(r_0) = 2k_0$  for some $r_0 <r^\star$.
Then, for $x\in I := [r_0, r^\star]$, $k_0 \leq v_r(r)\leq 2k_0$ and so $v_{rr} (r) \geq -K/r$ for
$K := 2(N-1)k_0 (1+4g^2_0)$. Thus,
\begin{equation}\label{v-r<g-0}
v_r (r) \leq k_0 + K [\ln r^\star  -\ln r],\quad r\in I.
\end{equation}
Since $k_0 + K [\ln r^\star  -\ln r] = 2k_0$ if and only if $r=r'_0 := r^\star e^{-k_0/K}$, we have $r_0 < r'_0$.
This implies that when $\varepsilon$ is sufficiently small (say, $\ve \leq \epsilon_4^* := [r^\star (1-e^{-k_0/K})]^2$),
we have  $r_\star = r^\star -\ve^{1/2} > r'_0 > r_0$, and so $r_\star \in I$ and
$$
\ln r^\star - \ln r_\star = - \ln \Big( 1- \frac{\ve^{1/2}}{r^\star}\Big)  < \frac{3 \ve^{1/2}}{2 r^\star}
< \frac{3 \ve^{1/2}}{R_1 (0)},
$$
the last inequality follows from \eqref{r-star>R(0)}. Therefore, by $v_{rr}<0$ and \eqref{v-r<g-0} we have
$$
v_r(r) \leq v_r (r_\star) \leq k_0 + K[\ln r^\star - \ln r_\star] \leq  k_0 + \frac{3K}{R_1 (0)} \varepsilon^{1/2},\quad
r\in [r_\star, r^\star]\subset I.
$$
Integrating this inequality over $[r_\star, r^\star]$ we have
\begin{equation}\label{v>=u}
v(r_\star) \geq  r^\star  -  k_0 \varepsilon^{1/2} - \frac{3K}{R_1 (0)} \varepsilon.
\end{equation}

On the other hand, for $r\in I_1 := [r_\star, \eta_1(s)]$ we have
$$
U_{1rr} (r,s) = \frac{\Phi''(\frac{r}{R_1 (s)})}{R_1 (s)} \leq M_2 := \frac{M_0}{R_1 (0)}.
$$
Hence, for any $r\in I_1 $, noting $R_1 (0)<R_1 (s)<\eta_1(s)$  we have
\begin{eqnarray*}
u^+_{1r} (r,s) & = & u^+_{1r} (\eta_1(s),s) +u^+_{1rr}(\theta,s) (r-\eta_1(s)),\quad \mbox{for some } \theta\in (r,\eta_1(s)),\\
 & \geq & U_{1r} (R_1 (s),s) +L_1 \varepsilon^{1/2} \eta_1(s) + (U_{1rr}(\theta,s)+L_1\varepsilon^{1/2}) (r-\eta_1(s))\\
& \geq & k_0 +L_1 \varepsilon^{1/2} R_1 (0)  - (M_2 +L_1 \varepsilon^{1/2}) (\eta_1(s)-r).
\end{eqnarray*}
Integrating it over $I_1$  we have
\begin{eqnarray*}
u^+_1 (r_\star, s) & \leq & u^+_1(\eta_1(s),s) - \big(k_0 +L_1 \varepsilon^{1/2} R_1 (0)\big) (\eta_1 (s)-r_\star) + (M_2+L_1\varepsilon^{1/2})\frac{(\eta_1(s)-r_\star)^2}{2}\\
& \leq & \eta_1(s) - \big(k_0 +L_1 \varepsilon^{1/2} R_1 (0)\big) (r^\star -r_\star) + 2 (M_2+L_1\varepsilon^{1/2})\ve \\
& < & r^\star +\varepsilon - k_0\varepsilon^{1/2} - L_1 R_1 (0)\varepsilon + 4 M_2 \ve  ,
\end{eqnarray*}
provided $\varepsilon$ is sufficiently small (say, $\ve \leq \epsilon_6^* := (M_2/ L_1)^2$, and so $L_1 \ve^{1/2} <M_2$).
Combining with \eqref{v>=u} we have
\begin{equation}\label{v>u+epsilon}
v(r_\star)-u^+_1(r_\star,s) \geq \Big( L_1 R_1 (0)-1 - 4 M_2 - \frac{3K}{R_1(0)} \Big)\varepsilon >0.
\end{equation}
The last inequality follow from the choice of $L_1$.

Now we prove that $v$ blocks the propagation of $u$ from it touching the point $Q_1$.
From above we see that $u^+_1$ is an upper solution in the time interval $[0,s]$ and so
$$
u(\cdot, t) \preceq  u^+_1(\cdot, t)\preceq u^+_1(\cdot, s),\quad t\in [0,s),
$$
where, the second inequality follows from the fact $u^+_{1t}>0$. Therefore, in the time interval $[0,s]$, the $r$-coordinate $\xi(t)$ of
the end point of $u(\cdot,t)$ either satisfies $\xi(t)<r_\star $, or $\xi(t)\geq r_\star $ but $u(r_\star,t) \leq u^+_1(r_\star ,t) \leq
u^+_1(r_\star ,s) < v(r_\star)$ by \eqref{v>u+epsilon}.
Since $v$ is a stationary solution to the equation and satisfies the boundary condition at the point $Q^\star$ (by \eqref{barrier-v}),
we see by the comparison principle that $u(\cdot,t) \preceq  v(\cdot)$ for any $t\in [0,s]$. This clearly contradicts
the assumption that $\xi(s)=\eta_1(s) > r^\star$. This contradiction then proves $s=\tau_1$.  Using comparison again in $[0,\tau_1]$ we see that $u^+_1$ is an upper solution and
$u\preceq u^+_1$ in $[0,\tau_1]$. This proves \eqref{est-11}.

Next, we prove \eqref{est-12}.  Taking $t= \tau_1$ in \eqref{est-11} we have
$$
u(\cdot ,\tau_1)\preceq U_1 (\cdot , \tau_1)  + M L_1 \varepsilon^{a_1} \preceq U_1 (\cdot ,\tau_1  +s_1),
$$
for some positive $s_1 $ which is taken as small as possible such that equality holds in the last inequality at some $r=\hat{r}_1$.
Then by $\Phi(z)-z\Phi'(z) \geq \delta_0$ in $z\in [0,1+a]$ for some $\delta_0 >0$, we have
$$
U_{1t} (r,t) = \frac{P}{R_1 (t)} \Big[  \Phi \Big(\frac{r}{R_1 (t)}\Big)  - \frac{r}{R_1(t)}  \Phi' \Big( \frac{r}{R_1(t)} \Big) \Big]
\geq  \frac{\delta}{2 \sqrt{t+s_0}} ,
$$
for $\delta := \sqrt{2P} \delta_0$, and
\begin{eqnarray*}
M L_1 \varepsilon^{a_1} & = & U_1 (\hat{r}_1,\tau_1 +s_1) - U_1 (\hat{r}_1,\tau_1)  =  \int_{\tau_1}^{\tau_1 +s_1} U_{1t} (\hat{r}_1, t)dt \\
& \geq & \delta \big( \sqrt{\tau_1 +s_1 +s_0} - \sqrt{\tau_1 +s_0} \big)  \\
& \geq &  \frac{\delta  s_1}{2\sqrt{\tau_1 + s_1 +s_0}}  \geq  \frac{\delta  s_1}{3\sqrt{\tau_1 + s_1}},
\end{eqnarray*}
provided $\ve$ is sufficiently small (say, $\ve \leq \epsilon_7^* := (P/s_0)^3$). So
$$
\delta^2 s_1^2 \leq 9 P M^2 L_1^2 + 9 M^2L_1^2 \ve^{1/3} s_1.
$$
This implies that $s_1 \leq 3ML_1 \sqrt{P} + O(\ve^{1/3}) = O(1)$, and so \eqref{est-12} is proved.
\end{proof}

%%%%%%%%%%%%%%%%%%
\subsection{Estimate in the time interval $[O(\ve^{-1/3}), O(\ve^{-7/12})]$}
Denote
$$
\nu_2 := \varepsilon^{b_1} ,\quad \varepsilon_2 := \nu_2 \ve = \varepsilon^{1+b_1},\quad x_2:= \nu_2 x,\quad r_2 := \nu_2 r, \quad y_2  = \nu_2 y,\quad  t_2 := \nu^2_2 t,
$$
$$
\Omega_2 := \{(x_2, y_2) \in \R^{N+1} \mid y_2 >r_2 =|x_2|, x_2 \in \R^N\},
$$
and set
$$
u_2 (r_2 , t_2) := \nu_2  u\left( {\frac {r_2} {\nu_2  }}, {\frac
{t_2} {\nu^2_2 }} + \tau_1 \right)
$$
then the problem \eqref{p} is converted into the following problem
\begin{equation}\label{prob2}
\left\{
 \begin{array}{ll}
\displaystyle  u_{2t_2} = {\frac {u_{2 r_2 r_2}} {1+ u^2_{2 r_2}}} + \frac{N-1}{r_2} u_{2 r_2}, & 0 < r_2 < \xi_{2} (t_2) := \nu_2 \xi \big( \frac{t_2}{\nu_2^2} \big),\ t_2  > 0,\\
u_{2 r_2} (0,t_2)=0, & t_2>0,\\
\displaystyle u_{2 r_2} (r_2, t_2) =  k \Big( \frac{u_2 (r_2,t_2)}{\nu_2} \Big), & r_2 = \xi_{2} (t_2),\ t_2>0,
\end{array}
\right.
\end{equation}
Denote
$$
R_2 (t_2) := \nu_2 \sqrt{2P \Big( \frac{t_2}{\nu_2^2} +\tau_1 +s_1 +s_0\Big)} =  \sqrt{2P \big( t_2 + \nu_2^2 (\tau_1 +s_1 +s_0) \big)},
$$
and
$$
U_2 (r_2, t_2)  :=  \nu_2 U \left( \frac{r_2}{\nu_2}, \frac{t_2}{\nu_2^2} +\tau_1 +s_1 +s_0 \right) =    R_2(t_2) \ \Phi \left( {\frac {r_2} { R_2(t_2)}} \right),
$$
where $\tau_1, s_1$ are those given in \eqref{est-12}. Then it is easy to see from the previous lemma that
$$
u_2 (r_2,0) \preceq U_2 (r_2, 0).
$$

In the following, we repeat the argument in the previous subsection to give a fine upper estimate for $u$ in the time interval $[O(\ve^{-1/3}), O(\ve^{-7/12})]$. Set
$$
b_2 := {\frac {3}{28}},\quad a_2:= \frac12 - 2b_2=\frac27,\quad \tau_2 := P \ve_2^{-2b_2} ,\quad l_2 := 2P \ve_2^{-b_2} ,
$$
$$
\psi_2 (r_2,t_2) := L_2 \ve^{1/2}_2 \Big(N t_2+ {\frac {r_2^2}{2}} \Big), \quad t_2\in [0,\tau_2],\ r_2\in [0, l_2],
$$
and
\begin{equation}\label{baru1}
u^+_2 (r_2,t_2) := U_2 (r_2, t_2)+  \psi_2(r_2,t_2), \quad  t_2\in [0,\tau_2],\ r_2\in [0, l_2],
\end{equation}
where $L_2$ is a positive constant satisfying
$$
2P L_2 >1 ,\quad L_2 P^2 - P > 4M_0 + 6(N-1)k_0 (1+ 4k_0^2),
$$
and so $L_2 R_2^2(0) > R_2(0) +4M_0 + 6(N-1)k_0 (1+4k_0^2)$.  Note that
$$
0\leq \psi_2 (r_2,t_2)\leq M L_2 \ve_2^{a_2},\quad r_2\in [0, l_2],\ t_2\in [0,\tau_2],
$$
and so
$$
U_2 (r_2, t_2) \leq u^+_2 (r_2, t_2) \leq U_2 (r_2,t_2) + M  L_2 \ve_2^{a_2},\quad r_2\in [0, \widehat{R}_2  (t_2)],\ t_2\in [0,\tau_2],
$$
where
\begin{eqnarray*}
\widehat{R}_2  (t_2) & := & (1+a)R_2(t_2)\leq (1+a)R_2(\tau_2)  =  (1+a) \sqrt{2P [\tau_2 + \nu_2^2 (\tau_1 + s_1 +s_0)]} \\
& = &  (1+a) \sqrt{2P [P\ve_2^{-2b_2}  + \ve^{2b_1}  (\tau_1 + s_1 +s_0)]} < 2 P \ve_2^{-b_2} =l_2,\quad t\in [0,\tau_2].
\end{eqnarray*}
Hence,
$$
u^+_2 ( \widehat{R}_2  (t_2),t_2) = R_2 (t_2) \Phi (1+a) +\psi_2 ( \widehat{R}_2  (t_2), t_2) \leq (1+b) R_2(t_2)  + M  L_2 \ve_2^{a_2},
$$
and so
$$
\widehat{R}_2  (t_2) - u^+_2 ( \widehat{R}_2 (t_2),t_2) \geq  (a-b)R_2 (t_2) - M  L_2 \varepsilon^{a_2} > \sqrt{2} (a-b)P - ML_2 \ve^{a_2} >0,
$$
provided $\varepsilon$ is sufficiently small (say, $\ve \leq \epsilon_8^* := [\sqrt{2} (a-b) P/ (M L_2)]^{7/2}$).
Therefore, the graph of $u^+_2(r_2,t_2)$ intersects the line $y_2 = r_2$ at a point $(\eta_2(t_2), \eta_2(t_2))$ with
$$
\eta_2 (t_2) < \widehat{R}_2  (t_2),\quad t_2\in [0,\tau_2].
$$

\begin{lem}\label{lem:tau-2}
The following estimates hold
\begin{equation}\label{est-11-2}
u_2 (\cdot ,t_2) \preceq U_2 (\cdot ,t_2) + M  L_2 \ve_2^{a_2}, \quad t_2\in [0,\tau_2],
\end{equation}
\begin{equation}\label{est-12-2}
u_2 (\cdot ,\tau_2) \preceq U_2 (\cdot ,\tau_2+s_2),
\end{equation}
for some $s_2 = O(1)\ve^{5/24}$.
\end{lem}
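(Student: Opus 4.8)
The plan is to repeat the proof of Lemma \ref{lem:tau-1} almost verbatim, the key point being that the rescaled problem \eqref{prob2} has exactly the same structure as \eqref{p}--\eqref{boundary1} with the period $\ve$ replaced by $\ve_2=\nu_2\ve$: the scaled boundary function $k\big(u_2(\xi_2,t_2)/\nu_2\big)$ equals $k(\xi_2/\nu_2)$ on the diagonal $y_2=r_2$, which is an $\ve_2$-periodic function of $r_2$ whose minimum over a period is still $k_0$. As in Lemma \ref{lem:tau-1}, I would show that $u^+_2$ is an upper solution of \eqref{prob2} on $[0,\tau_2]$ as long as its graph stays above that of $u_2$. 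The interior inequality
$$
u^+_{2t_2}\geq\frac{u^+_{2r_2r_2}}{1+(u^+_{2r_2})^2}+\frac{N-1}{r_2}\,u^+_{2r_2},\qquad r_2\in(0,\eta_2(t_2)],\ t_2\in[0,\tau_2],
$$
follows by the same computation: on $(0,\eta_2(t_2)]$ one has $U_{2r_2}>0$, $\psi_{2r_2}>0$, $U_{2r_2r_2}>0$; the term $NL_2\ve_2^{1/2}$ coming from $\psi_{2t_2}$ absorbs the $(N-1)L_2\ve_2^{1/2}$ from the first-order part and the $L_2\ve_2^{1/2}/(1+(\cdots)^2)$ from the second-order part; and the leftover difference is the same nonnegative quantity as in Lemma \ref{lem:tau-1}.

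The boundary comparison is the substantive step. On $r_2=0$ both functions satisfy the homogeneous Neumann condition, so it remains to prove $\xi_2(t_2)\leq\eta_2(t_2)$ on $[0,\tau_2]$. Since $u_2(\cdot,0)\preceq U_2(\cdot,0)\prec u^+_2(\cdot,0)$ one has $\xi_2(0)\leq R_2(0)<\eta_2(0)$, so the set of $s$ with $\xi_2\leq\eta_2$ on $[0,s]$ is a nonempty interval; let $[0,s]$ be the maximal one in $[0,\tau_2]$ and suppose for contradiction $s<\tau_2$. Pick $r_2^\star\in[\eta_2(s)-\ve_2,\eta_2(s))$ with $k(r_2^\star/\nu_2)=k_0$ (possible by $\ve_2$-periodicity) and let $v$ solve the stationary barrier problem $\frac{v_{r_2r_2}}{1+v_{r_2}^2}+\frac{N-1}{r_2}v_{r_2}=0$ on $r_2<r_2^\star$, $v(r_2^\star)=r_2^\star$, $v_{r_2}(r_2^\star)=k_0$. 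Exactly as in Lemma \ref{lem:tau-1}: $v_{r_2r_2}<0$; from $u^+_2(R_2(s),s)>R_2(s)$ one gets $\eta_2(s)>R_2(s)\geq R_2(0)$, hence $r_2^\star>R_2(0)-\ve_2>R_2(0)/2$ for $\ve$ small; on the sub-interval where $v_{r_2}\in[k_0,2k_0]$ one has $v_{r_2r_2}\geq-K/r_2$ with $K:=2(N-1)k_0(1+4k_0^2)$, so with $r_{2\star}:=r_2^\star-\ve_2^{1/2}$ one obtains $v(r_{2\star})\geq r_2^\star-k_0\ve_2^{1/2}-\frac{3K}{R_2(0)}\ve_2$. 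On the other side, bounding $U_{2r_2r_2}(\cdot,s)\leq M_0/R_2(0)$ and integrating the resulting lower bound for $u^+_{2r_2}(\cdot,s)$ over $[r_{2\star},\eta_2(s)]$ gives $u^+_2(r_{2\star},s)<r_2^\star+\ve_2-k_0\ve_2^{1/2}-L_2R_2(0)\ve_2+\frac{4M_0}{R_2(0)}\ve_2$, hence
$$
v(r_{2\star})-u^+_2(r_{2\star},s)\geq\frac{1}{R_2(0)}\Big(L_2R_2^2(0)-R_2(0)-4M_0-3K\Big)\ve_2>0
$$
by the defining inequality $L_2R_2^2(0)>R_2(0)+4M_0+6(N-1)k_0(1+4k_0^2)$ for $L_2$. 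Since $v$ is a stationary solution meeting the boundary condition at $(r_2^\star,r_2^\star)$, the comparison principle forces $u_2(\cdot,t_2)\preceq v$ on $[0,s]$, contradicting $\xi_2(s)=\eta_2(s)>r_2^\star$. Thus $s=\tau_2$; comparison on $[0,\tau_2]$ gives $u_2\preceq u^+_2$, and since $u^+_2=U_2+\psi_2$ with $0\leq\psi_2\leq ML_2\ve_2^{a_2}$ this yields \eqref{est-11-2}.

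For \eqref{est-12-2}, taking $t_2=\tau_2$ in \eqref{est-11-2} gives $u_2(\cdot,\tau_2)\preceq U_2(\cdot,\tau_2)+ML_2\ve_2^{a_2}\preceq U_2(\cdot,\tau_2+s_2)$ for the smallest $s_2>0$ at which equality holds at some $\hat r_2$; using $\Phi(z)-z\Phi'(z)\geq\delta_0>0$ on $[0,1+a]$ (a consequence of \eqref{1st-2nd-Phi-strict}) one has $U_{2t_2}\geq\delta/\big(2\sqrt{t_2+\nu_2^2(\tau_1+s_1+s_0)}\big)$ with $\delta=\sqrt{2P}\,\delta_0$, so integrating over $[\tau_2,\tau_2+s_2]$ and squaring yields $\delta^2 s_2^2\leq 9M^2L_2^2\ve_2^{2a_2}\big(\tau_2+s_2+\nu_2^2(\tau_1+s_1+s_0)\big)$. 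Now $\ve_2^{2a_2}\tau_2=P\ve_2^{2a_2-2b_2}=P\ve_2^{5/14}=O(1)\ve^{5/12}$, while $\ve_2^{2a_2}=o(1)$ and $\nu_2^2(\tau_1+s_1+s_0)=O(1)$ (indeed $\nu_2^2\tau_1=P$), so the dominant term on the right is $O(\ve^{5/12})$ and therefore $s_2=O(1)\ve^{5/24}$, which is \eqref{est-12-2}. The only real obstacle is the bookkeeping: one must verify that each of the smallness thresholds $\epsilon_1^*,\dots,\epsilon_7^*$ appearing in Lemma \ref{lem:tau-1} has a counterpart here with $\ve$ replaced throughout by $\ve_2=\ve^{7/6}$, and that the exponent choices $a_2=2/7$, $b_2=3/28$ simultaneously make $\ve_2^{a_2}$ small enough for the barrier argument and $\tau_2=P\ve_2^{-2b_2}$ long enough to cover real time up to $O(\ve^{-7/12})$; this is elementary arithmetic of exponents and introduces no new analytic difficulty.
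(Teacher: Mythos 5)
Your proof is correct and takes essentially the same approach as the paper: the paper simply declares the proof of \eqref{est-11-2} to be ``similar as proving \eqref{est-11}'' and you have filled in exactly that argument (rescaled barrier at a point where the $\ve_2$-periodic boundary function $k(\cdot/\nu_2)$ attains $k_0$, with the defining inequality for $L_2$ giving the positivity of $v-u_2^+$), while your derivation of the bound on $s_2$ from $U_{2t_2}\geq\delta/(2\sqrt{t_2+\nu_2^2(\tau_1+s_1+s_0)})$ and the exponent arithmetic $\ve_2^{5/14}=\ve^{5/12}$ reproduces the paper's computation. The only deviation is a harmless constant (your $9$ versus the paper's $4$), which does not affect the $O(1)\ve^{5/24}$ conclusion.
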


\begin{proof}
The proof for \eqref{est-11-2} is similar as proving \eqref{est-11}. We now prove \eqref{est-12-2}. Taking $t_2 = \tau_2$ in \eqref{est-11-2} we have
$$
u_2 (\cdot  ,\tau_2)\preceq  U_2 (\cdot, \tau_2)  + M L_2 \ve_2^{a_2} \preceq U_2 (\cdot,\tau_2  +s_2),
$$
for some positive $s_2 $ which is taken as small as possible such that equality holds in the last inequality at some $r_2 =\hat{r}_2$. Then by
$$
U_{2t_2} (r_2,t_2) = \frac{P}{R_2 (t_2)} \Big[  \Phi \Big(\frac{r_2}{R_2(t_2)}\Big)  - \frac{r_2}{R_2(t)}
 \Phi' \Big( \frac{r_2}{R_2(t_2)} \Big) \Big] \geq  \frac{\delta }{2 \sqrt{t_2 + \nu_2^2 (\tau_1 +s_1 +s_0)}} ,
$$
we have
\begin{eqnarray*}
M L_2 \ve_2^{a_2} & = & U_2 (\hat{r}_2, \tau_2 +s_2) - U_2 (\hat{r}_2,\tau_2)  =  \int_{\tau_2}^{\tau_2 +s_2} U_{2t_2} (\hat{r}_2, t_2 )dt_2 \\
& \geq & \delta \Big( \sqrt{\tau_2 +s_2 + \nu_2^2 (\tau_1 +s_1+s_0)} - \sqrt{\tau_2 +\nu_2^2 (\tau_1 +s_1 +s_0)} \Big)\\
& \geq &  \frac{\delta s_2}{2\sqrt{\tau_2 + s_2 + \nu_2^2 (\tau_1 +s_1 +s_0)}} \geq
\frac{\delta s_2}{2\sqrt{\tau_2 + s_2 + 2P}}.
\end{eqnarray*}
This implies that
$$
\delta^2 s_2^2  \leq  4M^2 L_2^2 (s_2 +2P) \ve_2^{4/7} + 4M^2L_2^2 P \ve_2^{5/14}.
$$
Therefore $s_2 =  O(1) \ve_2^{5/28} = O(1) \ve^{5/24}$.
\end{proof}

\medskip
Now we use the original notation $r,t,u,U$ instead of $r_2,t_2, u_2, U_2$ to rewrite \eqref{est-11-2} we have
$$
u(\cdot, t+\tau_1 ) \preceq U(\cdot, t+\tau_1 +s_1 +s_0) + M  L_2 \ve^{1/6},\quad t\in [ 0, T_2],
$$
where $T_2 :=  \frac{\tau_1 \tau_2}{P} =  P \ve^{-2[(1+b_1)(1+b_2)-1]} $, or, equivalently,
\begin{equation}\label{est-2-stage}
u(\cdot, t) \preceq U(\cdot, t+s_1+s_0) + M  L_2 \ve^{1/6},\quad t\in [\tau_1, \tau_1 + T_2].
\end{equation}
Similarly, rewrite \eqref{est-12-2} by $r,t,u,U$ we have
\begin{equation}\label{est2end}
u(\cdot, T_2 +\tau_1) \preceq U(\cdot, T_2 + \tau_1 + S_2 +s_1 +s_0),
\end{equation}
with $S_2 := s_2 / \nu_2^2 = O(1)\ve^{-1/8}$.

%%%%%%%%%%%%%%%%%%%%%%%%%%
%%%%%%%%%%%%%%%%%%%%%%
%%%%%%%%%%%%%%%%%%%%%%
\subsection{Induction}
Now we use induction.  Set
\begin{equation}\label{ab}
a_n := {\frac {5\cdot 4^{n-2} - 3^{n-1} }  {10\cdot 4^{n-2}
- 3^{n-1}}},\ \ \
b_n := {\frac { 3^{n-1} }  {10\cdot 4^{n-1} -4\cdot 3^{n-1}}}
\ \ \ \ \ {\rm for}\ n=1,2,3,\cdots,
\end{equation}
then $a_n + 2b_n = {\frac {1}{2}}$,
$$
a_1 = b_1 = {\frac {1}{6}},\ \ a_2 = {\frac {2}{7}},\ \
b_2 = {\frac {3}{28}},\ \ a_3 = {\frac {11}{31}},\ \
b_3 = {\frac {9}{124}}, \cdots.
$$
Furthermore, for $n=1,2,\cdots$, set
$$
\left\{
\begin{array}{l}
\displaystyle B_0 =1,\quad B_{n} :=  (1+b_1)(1+b_2)\cdots (1+b_n ) = {\frac {10\cdot 4^{n-1} - 3^{n}} {6\cdot 4^{n-1}}}\to \frac53\ (n\to \infty),\\
\nu_{n} := \varepsilon^{B_{n-1} -1},\quad \varepsilon_{n} := \nu_{n} \ve = \ve^{B_{n-1}},\quad \tau_{n}
:= P \varepsilon_{n}^{-2b_{n}} = P\ve^{-2b_{n} B_{n-1}},\\
T_{n} := P\ve^{2-2B_{n}}\to P\ve^{-\frac43}, \quad S_n:= \ve^{B_n (a_{n+1}-b_{n+1}) +2 - 2B_n} = \ve^{\frac12 [(\frac34)^{n-1} -1]} \to \ve^{-\frac12},\\
 \mathcal{T}_n := \tau_1 +T_2 +\cdots +T_n,\quad \mathcal{S}_n := s_0 + s_1 +S_2 +\cdots + S_n.
\end{array}
\right.
$$
Then
\begin{equation}\label{est-T_n-S_n}
\frac{\mathcal{T}_n}{n}\to P\ve^{-4/3},\quad \frac{\mathcal{S}_n}{n}\to \ve^{-1/2}\quad \mbox{as}\ n\to \infty.
\end{equation}
By induction, we suppose that, for $k=0,1,2,\cdots,n$, the following results hold
\begin{equation}\label{est-k-stage-1}
u(\cdot, t) \preceq U(\cdot, t+ \mathcal{S}_{k-1} ) + M  L_k  \ve^{1/6},\quad t\in [\mathcal{T}_{k-1}, \mathcal{T}_k],
\end{equation}
\begin{equation}\label{est-k-stage-2}
u(\cdot, \mathcal{T}_k) \preceq U (\cdot, \mathcal{T}_k + \mathcal{S}_k).
\end{equation}
Using a similar argument as above one can show that these inequalities also hold for $k=n+1$, and so hold for all $k\in \mathbb{N}$.

\medskip
\noindent
{\it Proof of Theorem} \ref{thm:est}. By \eqref{est-T_n-S_n} we have $\mathcal{T}_n \to \infty\ (n\to \infty)$.
Hence, any $t>0$ belongs to $[\mathcal{T}_{n-1}, \mathcal{T}_n)$ for some $n$.
Moreover, when $t$ is sufficiently large, it follows from \eqref{est-T_n-S_n} that
$$
\frac{P}{2}\ve^{-4/3} <\frac{\mathcal{T}_{n-1}}{n-1} \leq \frac{t}{n-1},\quad \frac{\mathcal{S}_{n-1}}{n-1} <2 \ve^{-1/2},
$$
and so
\begin{equation}\label{n-to-t}
\mathcal{T}_{n-1}\geq \frac{P(n-1)}{2}\ve^{-4/3},\quad n-1 \leq \frac{2}{P}\ve^{4/3}t,\quad \mathcal{S}_{n-1} \leq 2(n-1)\ve^{-1/2}.
\end{equation}

Denote $K:= \max\limits_{z\in [0,1+a]} \{\Phi(z) - z\Phi'(z)\}$, then
$$
U_t (r,t) = \frac{P}{R(t)}\Big[ \Phi\Big(\frac{r}{R(t)} \Big) - \frac{r}{R(t)} \Phi' \Big(\frac{r}{R(t)} \Big) \Big] \leq \frac{PK}{R(t)}.
$$
Combining with \eqref{est-k-stage-1} and \eqref{underline-u<u} we have
\begin{eqnarray*}
0 & \leq & u(r,t)- \underline{u}(r,t)\leq U(r,t+\mathcal{S}_{n-1}) + ML_n \ve^{1/6} - U(r,t+t_0)\\
& = & \int_{t+ t_0}^{t+\mathcal{S}_{n-1}} U_t (r,t) dt + ML_n \ve^{1/6}\\
& \leq & \sqrt{2P} K [\sqrt{t+\mathcal{S}_{n-1}} -\sqrt{t + t_0}] + ML_n \ve^{1/6}\\
& \leq & \frac{\sqrt{2P}K \mathcal{S}_{n-1}}{2\sqrt{t+t_0}} + ML_n \ve^{1/6}\\
& \leq & \frac{\sqrt{2P}K \mathcal{S}_{n-1}}{2\sqrt{\mathcal{T}_{n-1} }}  + ML_n \ve^{1/6}.
\end{eqnarray*}
By \eqref{n-to-t} we have
\begin{equation}\label{finer-est-but-with-t}
0 \leq  u(r,t)-\underline{u}(r,t) \leq 2K \sqrt{n-1} \ve^{1/6} +ML_n \ve^{1/6} = O(1) [\ve^{5/6}\sqrt{t} + \ve^{1/6}].
\end{equation}
When $t$ is not large we can take the coefficient sufficiently large such that the estimate holds for such $t$. Hence
this estimate can be true for all $t$.
In particular, when $t \in [0, O(1) \ve^{-4/3}]$ we have $\ve^{2/3}\sqrt{t} = O(1)$, and so
$$
0 \leq  u(r,t)-\underline{u}(r,t) \leq O(1) \ve^{1/6}.
$$
This proves Theorem \ref{thm:est}. 
\hfill $\Box$

\begin{remark}\label{rem:best-est}\rm
From \eqref{finer-est-but-with-t} we see that the error of $u-\underline{u}$ is $O(1)\ve^{1/6}$ in the time interval $[0, O(1)\ve^{-4/3}]$,
and it is $O(1)\ve^{5/6}\sqrt{t}$ beyond this interval. 
Though the former interval is very wide when $\ve\ll 1$, the latter error is not uniform in $t$.
It is natural to expect to give a uniform (independent of $t$) estimate in the whole time interval $[0,\infty)$. 
Since the derivative of the solution on the boundary oscillates violently, the approach for such uniform estimate 
must be highly non-trivial, as can be seen in this section. 
\end{remark}
%%%%%%%%%%%%%%%%%%%%%%%%%%%%%%%%%%%%%%%%%%%%%%%%%
%%%%%%%%%%%%%%%%%%%%%%%%%%%%%%%%%%%%%%%%%
%%Three stages is deleted

\noindent
{\bf Acknowledgement.} The authors would like to thank Professors Lixia Yuan, Kelei Wang, Wei Zhao and Maolin Zhou
for their helpful discussion.

%%%%%%%%%%%%%%%%%%%%%%%%%%
%%%%%%%%%%%%%%%%%%%%%%%%%%%%%%
%%%%%%%%%%%%%%%%%%%%%

\end{document}